\def\A{\mathcal{A}}
\def\e{\varepsilon} 
\def\N{\mathbb{N}}
\def\S1{\mathbb{S}^1}
\def\R{\mathbb{R}}
\def\red{\color{red}}
\newtheorem{theorem}{Theorem}
\newtheorem{lemma}[theorem]{Lemma}
\newtheorem{proposition}[theorem]{Proposition}
\newtheorem{corollary}[theorem]{Corollary}
\DeclareMathSymbol{\varnothing}{\mathord}{AMSb}{"3F}
\begin{document}
\renewenvironment{proof}{\noindent {\bf Proof.}}{ \hfill\qed\\ }
\newenvironment{proofof}[1]{\noindent {\bf Proof of #1.}}{ \hfill\qed\\ }
\newenvironment{proofofof}[1]{\noindent {\bf Proof of #1.}}{}

\title{On the complexity  of polygonal billiards}  

\author{Tyll Krueger}
\address{Wrocław University of Technology, Faculty of Communication and Information Technology, ul.\ Janiszewskiego 11/17, 50-372 Wrocław, Poland}
\email{tyll.krueger@pwr.edu.pl}
\author{Arnaldo Nogueira}
\address{Aix Marseille Univ, CNRS,  I2M,  Marseille, France}
\email{arnaldo.nogueira@univ-amu.fr}

\author{Serge Troubetzkoy}
\address{Aix Marseille Univ, CNRS,  I2M,  Marseille, France}
\email{serge.troubetzkoy@univ-amu.fr}
 
\begin{abstract} 
We show that  the complexity of the billiard in a typical polygon grows 
cubically and the number of saddle connections grows quadratically along certain subsequences.
It is known that the set  of points whose first $n$-bounces hits the 
same sequence of sides as the orbit of an aperiodic phase point $z$
converges to 
$z$.
We establishe a polynomial lower bound estimate on this convergence rate for almost every $z$. This yields an upper bound on 
the upper metric complexity and upper slow entropy of
polygonal billiards. We also prove  significant deviations from the expected convergence behavior. Finally we extend these results to higher dimensions as well as to arbitrary invariant measures.
\end{abstract} 
\maketitle

\section{Introduction}
How complex is the billiard dynamics in a polygon?  According to Katok this is one of the two main open questions concerning arbitrary polygonal billiards \cite{Ka2} or even 
one of the five most resistant problems in dynamics \cite{Ka3}.
This question has been
the subject of many articles. 
The first such result, due to 
Boldrighini, Keane and Marchetti, and independently 
to Sinai, is that the metric entropy of the billiard 
map with respect to the Liouville measure (and thus of the billiard flow) is zero
\cite{BoKeMar,Si1,Si2}.
 Katok proved that the metric entropy of every invariant measure is zero as well \cite{Ka1}.  

To refine these results one can study two quantities.  A saddle connection is an orbit segment starting and ending at a corner of the polygon.  The quantity $N_g(t)$ denotes the number of saddle connections with geometric length at most $t$ and $N_c(n)$ denotes the number of saddle connections with combinatorial length at most $n$, 
here the number of links of a saddle connection is called its {\em combinatorial length}. 
Label the sides of the polygon by a finite alphabet
$\mathcal{A}$ and code the forward orbit of a point by the sequence of sides it hits.
The other quantity traditionally studied is $p(n)$ the number of distinct words of length $n$ which arise in this coding. 
Cassaigne, Hubert and Troubetzkoy showed that  
\begin{equation}\label{e1}
p(n) = \sum_{j=0}^{n-1} N_c(j)
\end{equation}
for all convex polygons \cite{CaHuTr}.
The proof given in \cite{CaHuTr} shows that this formula actually also holds for non-convex simply connected polygons since any $n$-cell is convex (\cite{Ka1}). A similar formula with possible low order correction terms was shown for all polygons by Bedaride \cite{Be}, correction terms are necessary for non simply connected polygons.

Masur showed that  $0 <  \liminf \frac{N_g(t)}{t^2}\le \limsup \frac{N_g(t)}{t^2} < \infty$ for any rational polygon  \cite{Mas1,Mas2}. Elementary arguments show that an analogous result holds for $N_c(n)$, thus the \cite{CaHuTr} relationship implies that  
$0 <  \liminf \frac{p(n)}{n^3} \le \limsup \frac{p(n)}{n^3}  < \infty$ for any rational polygon.

Suppose $Q$ is an arbitrary polygon, for two point $x,y \in Q$ let
$N_g(x,y,t)$ denote the number of paths of length at most $t$ connecting $x$ to $y$. In 1986, in an unpublished, but widely circulated letter to Masur, Boshernitzan showed that for fixed $\e > 0$ and $x \in Q$, for Lebesgue a.e.~$y$ we have $\lim \frac{N(x,y,t)}{t^{2+\e}} = 0$ and $\liminf \frac{N(x,y,t)}{t^{2+\e}} < \infty$, this result was published by Gutkin and Rams in 2009 \cite{GuRa}.
In this letter Boshernitzan also showed
that  there exist $Q,x,y$ such that $N(x,y,t) = 0$ for all $t > 0$, seemingly the first result about non-illumination for polygonal billiards.

Katok's result implies that each of  $N_g(t),N_c(n)$ and $p(n)$ grows sub-exponentially in any polygon, in particular the topological entropy of the billiard shift map $\sigma_Q: \Sigma_Q \to \Sigma_Q$ is zero, where 
$\Sigma_Q$ denotes the collection of all infinite codes obtained in the polygon $Q$.
Different proofs of zero topological entropy were given by Galperin, Krueger and Troubetzkoy \cite{GaKrTr}, as well as by Gutkin and Haydn \cite{GuHa1,GuHa2}.  

Scheglov has improved this for special classes of polygons, he first showed that for almost every triangle $N_c(n)$ grows slower than $e^{n^\varepsilon}$ for every $\varepsilon > 0$ \cite{Sc1,Sc2}, and then he showed that for almost every right triangle $N_c(n) \ge n^{\frac{2}{\sqrt{3}} - \e}$ infinitely often for every $\e > 0$ \cite{Sc3}.
For each $m \ge 3$ and each $k \ge 1$ Hooper 
exhibited an open set of $m$-gons for which 
$\liminf_{t \to \infty} N_g(t)/(t \log^k(t)) > 0$
\cite{Ho}.
Troubetzkoy showed that $p(n)$ grows at least quadratically in any polygon
\cite{Tr}.

In this article  we first study the  functions $N_g(t), N_c(n)$ and $p(n)$ for topologically typical polygons. For generic polygons, up to fluctuations the number or saddle connections grows quadratically, and the complexity of generic polygons grows cubically (Theorem \ref{t5}), other than Scheglov's right triangle result, these are the first polynomial growth result known for irrational billiards, and the exponents agree with the growth exponent of rational billiards.

Then we go on to study the metric version of the complexity 
or the slow entropy  of the billiard in an arbitrary polygon. 
We give an almost sure polynomial lower 
bound estimate on the decay of the size of an $n$-cell (Theorem \ref{t1})
and prove that this bound has  significant deviations (Theorem \ref{t2}). 
Theorem \ref{t1} implies that the upper metric complexity of the billiard shift in the sense of Ferenczi \cite{Fe}
and the upper slow entropy of the billiard shift in the sense of Katok and Thouvenot  \cite{KaTh} have polynomial upper bounds (Corollary \ref{c0}).

\section{Background material}
\subsection{Polygonal billiards}
Let $Q$ be a planar polygon.
The {\em billiard flow} $\phi_t$ in $Q$ is the free motion of a point 
particle with unit speed and specular reflections from the boundary; the 
reflections are not defined at corners of the polygon.  
The {\em phase space} \footnote{We use italic for the flow in contrast to $P_Q$ for the map.} $\mathcal{P}_Q$ of $\phi_t$ is the set of all inner pointing unit vectors, i.e.,
$$\mathcal{P}_Q = \{(x,\psi): x \in Q, \psi \in [0,2\pi), \text{ the vector } (x,\psi) \text{ points into Q }\}.$$

The {\em billiard map} $S$ is the first return map of the billiard flow to 
the boundary $\partial Q$, it is not defined for points for which the 
next arrival to the boundary is a vertex.
The {\em phase space} $P_Q$ of the billiard map is the set of inward pointing unit vectors with {\em foot point} in the boundary of $Q$ and in this case we will denote the foot point by $s$ (rather than $x$).  We will use the arc length parametrization of $\partial Q$.
The direction $\theta$ of a vector $z = (s,\theta)$ will refer to the angle the vector makes with the positive direction of the boundary. 
We  fix the positive orientation on $\partial Q$ and define
the angle at the vertices of $Q$ by one sided continuity.

We label the sides of an $m$-gon $Q$ by an $m$-element alphabet $\mathcal{A}$ and code each forward orbit  by the sequence of 
sides it hits.
For any $z \in P_Q$ such that $S^n(z)$ is defined the {\em $n$-cell} $C_n(z)$  is the open set of points for which $S^n$ is defined and whose coding sequence up to time $n$ as coincides with $z$'s coding sequence, equivalently
it is the largest open set containing $z$ for which 
the billiard map $S^n$ is continuous.  Katok 
showed that each $n$-cell is a convex polygon
whose boundary has at most $Kn$ sides where $K$ is an explicit constant, and each side 
of an $n$-cell is a branch of the singularity set of order $j$ where $0 \le j \le n$, 
i.e., points where $T^{j}$ is not defined but $T^{j-1}$ is defined \cite[Lemma 4]{Ka1}.

Let $\lambda$ and $\mu$ denote the  {\em invariant  Liouville measure} for the billiard flow, respectively, the billiard map, in our coordinates they are given by  $d\lambda = dx \, d\psi$ and  $d\mu = \sin \theta \, d\theta \, ds$.
We consider the $L_1$ metric $d((s,\theta),(s',\theta'))= |s-s'|+|\theta - \theta'|$
on ${P}_Q$
and denote the  $L_1$-ball in $P_Q$ centered at $z$ of radius $\varepsilon$ by
$B(z,\varepsilon)$.
We also consider the metric $\rho((x,\psi),(x',\psi')) = ||x-x'||_2 + |\psi - \psi'|$ on $\mathcal{P}_Q$, and notice that the two metrics agree
when restricted to a single side of the boundary.

To  define  a topology on the set of polygons with a fixed number of sides,
we define a basis of as follows.
Let $Q$ be a polygon and assume $\varepsilon$ is small enough such that 
the $\varepsilon$-neighborhoods  of the vertices of $Q$ are pairwise disjoint.
Then $\hat{Q}$ is in the open $\varepsilon$-neighborhood of $Q$
if  there is exactly one vertex of $\hat{Q}$ in the open
$\varepsilon$ neighborhood of each vertex of $Q$. 

\subsection{Polynomial entropy. }
The notion of topological slow entropy, or polynomial entropy, was first defined and studied by K\r{u}rka, Penne and Vaienti \cite{KuPeVa} in 2002 and independently by
Marco \cite{Mar1} in 2013. For a subshift  
the {\em lower} and {\em upper polynomial entropies} are defined as
\begin{equation*}
 \underline{h}_{poly}(\Sigma,\sigma) = \liminf_{n\to\infty} \frac{\ln p(n)}{\ln n},\;\;
    \overline{h}_{poly}(\Sigma,\sigma) = \limsup_{n\to\infty} \frac{\ln p(n)}{\ln n}.
\end{equation*} 
Here $p(n)$ denotes the number of distinct words of length $n$ in $\Sigma$. 

\subsubsection{Previous results on polynomial entropy and billiards}
Marco studied the upper polynomial entropy of billiards in smooth convex tables, he showed  that the upper polynomial entropy of the billiard map in a 
circle is 1, is 2 for non-circular ellipses, and at least 2 for any other smooth convex billiard table \cite{Mar2}. 

\subsection{Metric complexity or slow entropy}
We define the Ferenczi's metric complexity or Katok-Thouvenot's slow entropy in the setting of shift spaces \cite{Fe,KaTh}.  Let $\Sigma$ be a (one-sided) shift space over the alphabet $\mathcal{A}$ and $\sigma: \Sigma \to \Sigma$ the left shift map, which preserves an invariant measure $\nu$.

For two words $a,b \in \A^k$  we consider the $\bar d$ or Hamming distance 
$\bar{d}(a,b) := \frac{1}{k} \#\{i: a_i \ne b_i\}$.  For $x \in \Sigma$ let $\mathcal{B}(x,n,\e) := \{y \in \Sigma: \bar{d}(x_{0,n-1},y_{0,n-1)}) < \e\}$ where $x_{0,n-1}$ denotes the truncation of $x$ to its  first $n$
symbols. 
By \cite{GaKrTr} the partition into cylinders of length 1 (i.e., the sides of $Q$)
is a generating partition for the shift map $\Sigma_Q$, thus the definition
of metric complexity becomes somewhat simpler in this setting \cite[Cor 1]{Fe,KaTh}.
Let $P(n,\e)$ denote the smallest number $P$ such that there exists a subset of $\Sigma$ of measure at least $1-\e$ covered by  at most $P$ balls $\mathcal{B}(x,n,\e)$.
Let $g : \N \to \N$ be an increasing function.
We say $(\Sigma,\sigma,\nu)$ has {\em upper metric complexity} or {\em upper slow entropy} at most $g(n)$ if
$$\lim_{\e \to 0}\limsup_{n \to \infty} \frac{P(n,\e)}{g(n)} \le 1.$$

\section{Statements of results}
\begin{theorem}\label{t5}
For each $\ell \ge 3$ there is a dense $G_{\delta}$-set $G$ 
of polygons with $\ell$ sides  such that for each 
$Q \in G$  there exists an infinite strictly increasing 
integer sequence  $(m_k)$  such that
$$\lim_{k \to \infty} \frac{\log N_g^Q(m_k)}{\log m_k} = \lim_{k \to \infty} \frac{\log N_c^Q(m_k)}{\log m_k} = 2, \  \mbox{and} \ 
\lim_{k \to \infty} \frac{\log p^Q(m_k)}{\log m_k} = 3.$$
\end{theorem}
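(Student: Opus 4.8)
The plan is to build $G$ as a countable intersection of open dense sets. Rational polygons are dense (in the space of $\ell$-gons with the topology of the excerpt), and if $Q_0$ is rational, Masur's theorem \cite{Mas1,Mas2} gives $N_g^{Q_0}(t)\asymp t^2$; the elementary comparison of combinatorial and geometric length of saddle connections upgrades this to $N_c^{Q_0}(n)\asymp n^2$; and summing over $j$ in the identity $p^{Q_0}(n)=\sum_{j=0}^{n-1}N_c^{Q_0}(j)$ \cite{CaHuTr} (or \cite{Be}) yields $p^{Q_0}(n)\asymp n^3$. Since the $\liminf$ and $\limsup$ of $N_c^{Q_0}(n)/n^2$ are both finite and positive, all three estimates in fact hold for every $n\ge n_0(Q_0)$, not merely along a subsequence. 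For $k\ge 1$ let $G_k$ be the set of $\ell$-gons $Q$ for which some integer $n>k$ satisfies
$$n^{3-1/k}<p^Q(n)<n^{3+1/k},\qquad n^{2-1/k}<N_c^Q(n)<n^{2+1/k},\qquad n^{2-1/k}<N_g^Q(n)<n^{2+1/k},$$
and at which, moreover, each of the three counting functions is continuous in $Q$. If each $G_k$ is open and dense, then $G:=\bigcap_k G_k$ is the desired set: for $Q\in G$ a diagonal choice of witnesses produces a strictly increasing sequence $(m_k)$ along which $\log p^Q(m_k)/\log m_k\to 3$ and $\log N_c^Q(m_k)/\log m_k=\log N_g^Q(m_k)/\log m_k\to 2$.

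Next I would record the semicontinuity needed for openness and density. For fixed $n$ (resp.\ $t$), the functions $Q\mapsto p^Q(n)$, $Q\mapsto N_c^Q(n)$, $Q\mapsto N_g^Q(t)$ are lower semicontinuous: a nonempty $n$-cell is an open set on which $S^n$ is continuous, so it survives (perhaps shrunk or split) under a small perturbation; and a saddle connection persists because, unfolding the billiard along its unchanged side-sequence, the trajectory from a vertex becomes a straight ray whose endpoint moves with nonzero speed in the emission angle, so the condition ``hit the target vertex'' can be re-solved for nearby $Q$. Being integer valued and lower semicontinuous on a Baire space, each of these functions is continuous on a dense open set $V_n$, on which it is locally constant; hence the displayed strict inequalities cut out an open subset of $V_n$, and $G_k$, a union over $n>k$ of such sets, is open. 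For density: given any $\ell$-gon, pick a rational $Q_0$ nearby; for all large $n$ it meets the displayed inequalities with room to spare; now perturb $Q_0$ slightly into $V_n$. Lower semicontinuity preserves the three lower bounds, and it remains to secure the three upper bounds.

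For the upper bounds I would show that if $Q$ is $\delta$-close to $Q_0$ with $\delta$ small depending on $n$, then $p^Q(n)\le p^{Q_0}(n)+O(n^3)$ and $N_c^Q(n)=O(N_c^{Q_0}(n))$, $N_g^Q(n)=O(N_g^{Q_0}(n+1))$, whence all three upper bounds follow from the rational asymptotics once $n$ is large. The tool is shadowing: along a $Q_0$-orbit that stays $\varepsilon$-away from every vertex for its first $n$ steps, the billiard map of $Q$ stays within $(C/\varepsilon)^n\delta$ of that of $Q_0$, so for $\delta\le\delta_0(n,\varepsilon)$ the $n$-codings agree there. Consequently the genuinely new $n$-cells of $Q$, and the genuinely new saddle connections of $Q$ of combinatorial length $\le n$, must lie in a vanishingly thin neighbourhood of the degeneracy locus of $Q_0$: a new cell can only appear where two singularity curves of $Q_0$ of orders $j_1<j_2$ coincided on a sub-arc, and such a coincidence is precisely the past of a saddle connection of $Q_0$ of combinatorial length $j_2-j_1$; a new saddle connection can only be a ``near-miss'' of one of $Q_0$. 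Counting then gives the bounds: the number of new cells is controlled by the number of incidences in the order-$\le n$ singularity arrangement of $Q_0$, which is $O(p^{Q_0}(n))=O(n^3)$, and (taking $\delta$ small enough that near-misses cluster only around actual saddle connections) the number of new saddle connections of length $\le n$ is $O(N_c^{Q_0}(n))=O(n^2)$.

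The main obstacle is precisely this last step: bounding how much the word complexity and the saddle-connection count can jump under an arbitrarily small perturbation of a rational polygon. A merely polynomial bound would be comparatively routine, but pinning the exponents at $3$ and $2$ requires both the identification of singularity-coincidences with saddle connections and the incidence estimate for the singularity arrangement of $Q_0$; and while the shadowing estimate is elementary, it must be applied with care because the expansion factor deteriorates near the vertices, which is why the admissible $\delta_0(n,\varepsilon)$ shrinks with $n$ --- harmless here, since in the construction $n$ is chosen only after $Q_0$ and its neighbourhood.
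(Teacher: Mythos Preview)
Your overall architecture---Baire category, rational approximation, lower semicontinuity for the lower bounds---matches the paper's. The genuine gap is your upper bound on $N_c^{\hat Q}(n)$ for $\hat Q$ near a rational $Q_0$. You assert $N_c^{\hat Q}(n)=O(N_c^{Q_0}(n))$ on the grounds that ``a new saddle connection can only be a near-miss of one of $Q_0$''. This is not correct as stated: a saddle connection of $\hat Q$ connecting vertices $v_1,v_2$ with code $w$, absent in $Q_0$, degenerates in $Q_0$ not to a single saddle connection but to a \emph{saddle chain}---a collinear concatenation of saddle connections through intermediate vertices lying on the straight segment $v_1v_2$ in the unfolding. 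A single chain with $j$ links (hence $j+1$ vertices) can, under perturbation, produce up to $\binom{j+1}{2}-j$ new saddle connections, one for each nonadjacent pair of its vertices; and $j$ can be as large as $n$ (think of a length-$1$ saddle connection traversed $n$ times). Thus the map from new saddle connections to old ones is highly noninjective, and $O(N_c^{Q_0}(n))$ does not follow.

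The paper confronts exactly this phenomenon (Lemma \ref{l:8}): chains are grouped according to the combinatorial length $n^{a}$ of their shortest link, with $a$ falling in one of $K=\lceil 2/\varepsilon\rceil$ dyadic-style windows. In each group one balances the number of chains (at most $2\overline C\,n^{2a_i}$, by counting shortest links) against the number of new connections a chain can spawn (at most $(2n/n^{a_{i-1}})^2$), obtaining $O(n^{2+2/K})=O(n^{2+\varepsilon})$ per group and hence $N_c^{\hat Q}(n)\le(9\overline C/\varepsilon)\,n^{2+\varepsilon}$ overall. Summing via \eqref{e1} then yields $p^{\hat Q}(n)\le(\overline K/\varepsilon)\,n^{3+\varepsilon}$. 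These $n^{2+\varepsilon}$, $n^{3+\varepsilon}$ bounds, with neighborhood depending on $\varepsilon$, are exactly what the theorem requires; the stronger $O(n^2)$, $O(n^3)$ you claim are neither proved in the paper nor needed, and your proposed incidence count does not supply them. Your semicontinuity/continuity-locus framing is a legitimate alternative packaging of the $G_\delta$ construction, but it does not sidestep the chain-counting step, which is the heart of the argument.
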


\begin{corollary}
For each 
$Q \in G$  
the polynomial entropies satisfy  
$$\underline{h}_{poly}(\Sigma_Q,\sigma_Q)  \le 3 \le \overline{h}_{poly}(\Sigma_Q,\sigma_Q).$$
\end{corollary}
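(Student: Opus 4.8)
The plan is to read the corollary off Theorem \ref{t5} directly; no additional construction is needed. Recall that by definition
\[
\underline{h}_{poly}(\Sigma_Q,\sigma_Q) = \liminf_{n\to\infty} \frac{\ln p^Q(n)}{\ln n}, \qquad \overline{h}_{poly}(\Sigma_Q,\sigma_Q) = \limsup_{n\to\infty} \frac{\ln p^Q(n)}{\ln n},
\]
and that for $Q \in G$ Theorem \ref{t5} furnishes a strictly increasing integer sequence $(m_k)$ with $\lim_{k\to\infty} \frac{\log p^Q(m_k)}{\log m_k} = 3$ (the base of the logarithm being irrelevant in such ratios).

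First I would note that $\big(\tfrac{\ln p^Q(m_k)}{\ln m_k}\big)_k$ is a subsequence of $\big(\tfrac{\ln p^Q(n)}{\ln n}\big)_n$ converging to $3$, so $3$ is a subsequential limit of the latter. Since the $\liminf$ of a real sequence is a lower bound for each of its subsequential limits, this gives $\underline{h}_{poly}(\Sigma_Q,\sigma_Q) \le 3$; symmetrically, since the $\limsup$ is an upper bound for each subsequential limit, $\overline{h}_{poly}(\Sigma_Q,\sigma_Q) \ge 3$. Combining the two inequalities yields the assertion.

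There is essentially no obstacle: the content is entirely carried by the cubic growth of $p^Q$ along $(m_k)$ proved in Theorem \ref{t5}, together with the elementary inequality $\liminf \le (\text{any subsequential limit}) \le \limsup$. It is worth emphasizing in the write-up that the corollary does \emph{not} claim $\underline{h}_{poly}(\Sigma_Q,\sigma_Q)=3$ or $\overline{h}_{poly}(\Sigma_Q,\sigma_Q)=3$: away from the subsequence $(m_k)$ the complexity $p^Q(n)$ of a general polygon is only known to grow subexponentially (by Katok's theorem), so determining the exact values would require controlling $p^Q(n)$ for \emph{all} $n$, which lies beyond the mechanism behind Theorem \ref{t5}.
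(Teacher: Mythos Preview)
Your argument is correct and is exactly the intended one: the paper states the corollary immediately after Theorem~\ref{t5} without giving a separate proof, precisely because the existence of a subsequence along which $\frac{\log p^Q(m_k)}{\log m_k}\to 3$ forces $3$ to lie between the $\liminf$ and the $\limsup$. Your closing remark that equality cannot be concluded from Theorem~\ref{t5} alone is also on point.
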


We turn to the study of the metric complexity of polygonal billiards.
The set of points whose billiard orbit hits a given finite sequence of sides of length $n$ is called an $n$-cell, and $C_n(z)$ denotes the $n$-cell containing the point $z$.
The main result of \cite{GaKrTr} is that for each non-singular $z$
the set $C_n(z)$ converges to the point $z$ if and only if $z$ is aperiodic. 
Our first result in this direction is an almost sure lower 
bound estimate on the speed of this convergence. 
Let $P_Q$ denote the phase space of the billiard map and
 $B(z,\varepsilon)$ denotes the $L_1$-ball in $P_Q$ centered at 
$z$ of radius $\varepsilon$.
The next result and its corollary  improve implicit results of Sinai \cite{Si1,Si2}.

\begin{theorem}\label{t1} Let $Q$ be a polygon and $f: \mathbb{N} \to \mathbb{N}$ be a monotonically increasing function 
such that $\frac{1}{n f(n)}$ is summable.
Then there is a dense $G_\delta$-set $G\subset P_Q$ of full $\mu$ measure 
such that 
for  every $z \in G$  there exists $n_0 \ge 0$ such that
for all $n \ge n_0$ we have 
$$B\left (z,\frac{1}{n^3 f(n)} \right ) \subset C_n(z).$$
\end{theorem}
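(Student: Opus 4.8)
The plan is to bound the inradius of the $n$-cell $C_n(z)$ from below in terms of how closely the orbit $z,Sz,\dots ,S^nz$ approaches the vertices and the tangential locus, and then to control that recurrence by the Borel--Cantelli lemma. The essential point is that the $S$-invariance of $\mu$ reduces the measure input to the single elementary estimate $\mu\{(s,\theta):\mathrm{dist}(s,V)\sin\theta<\e\}\le C_Q\,\e$, where $V$ is the vertex set of $Q$; in particular one never needs a polynomial upper bound for $p(n)$, which is not available for a general polygon.

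\emph{Step 1 (geometric lemma).} Write $\Phi(s,\theta):=\mathrm{dist}(s,V)\,\sin\theta$. I claim there is $c=c(Q)>0$ such that, whenever $S^nz$ is defined,
$$B\Big(z,\ c\,n^{-1}\min_{0\le k\le n}\Phi(S^kz)\Big)\subset C_n(z).$$
Unfold the first $n$ reflections: the orbit of $z$ becomes a straight segment crossing the unfolded copies $E_1,\dots ,E_n$ of the sides $e_1,\dots ,e_n$, and a nearby $z'=(s',\theta')$ lies in $C_n(z)$ once its unfolded straight trajectory crosses each $E_k$ in the interior, i.e.\ once the $k$-th bounce point is displaced by less than $\mathrm{dist}(S^kz,V)$ for every $k\le n$. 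In the coordinates $(s,\theta)$ the return map between a fixed ordered pair of sides has the form $\theta_{k+1}=\pm\theta_k+\mathrm{const}$ with $s_{k+1}$ affine in $s_k$, so Liouville invariance forces $\partial s_{k+1}/\partial s_k=\sin\theta_k/\sin\theta_{k+1}$, whence $\partial s_k/\partial s_0=\sin\theta_0/\sin\theta_k$ telescopes; a variation-of-constants estimate, together with the a priori bound $\le n\,\mathrm{diam}(Q)$ on the geometric length up to the $k$-th bounce, gives $|\partial s_k/\partial\theta_0|\le C(Q)\,n/\sin\theta_k$. Thus a perturbation of $z$ of $L_1$-size $r$ moves the $k$-th bounce point by at most $C(Q)\,n\,r/\sin\theta_k$, which is less than $\mathrm{dist}(S^kz,V)=\Phi(S^kz)/\sin\theta_k$ as soon as $r<\Phi(S^kz)/(C(Q)n)$; taking the minimum over $k$ gives the claim. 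The factors $1/\sin\theta_k$ are controlled uniformly over the ball, because $\theta_k$ is affine in $\theta_0$ of slope $\pm1$, so $\sin\theta_k$ varies by at most the ball radius, which is far smaller than $\sin\theta_k$ itself.

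\emph{Step 2 (Borel--Cantelli and assembly).} Put $\e_j:=1/(jf(j))$. Since $\mu$ is $S$-invariant,
$$\mu\{z:\Phi(S^jz)<\e_j\}=\mu\{z:\Phi(z)<\e_j\}\le C_Q\,\e_j,$$
and $\sum_j C_Q\e_j=C_Q\sum_j\tfrac1{jf(j)}<\infty$ by hypothesis, so by Borel--Cantelli there is a full-$\mu$-measure set on which $\Phi(S^jz)\ge1/(jf(j))$ for all large $j$. For such $z$ fix $j_0$ accordingly. The finitely many numbers $\Phi(S^kz)$, $k<j_0$, are positive (the orbit hits no vertex and, off a null set, no side tangentially), so there is $n_0\ge j_0$ with $\min_{k<j_0}\Phi(S^kz)\ge 1/(nf(n))$ for all $n\ge n_0$; and since $nf(n)$ is increasing, $\min_{j_0\le k\le n}1/(kf(k))=1/(nf(n))$. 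Hence $\min_{0\le k\le n}\Phi(S^kz)\ge1/(nf(n))$ for $n\ge n_0$, and Step~1 gives $B\big(z,\tfrac{c}{n^2f(n)}\big)\subset C_n(z)$ for $n\ge n_0$. Finally, $f$ being an arbitrary monotone function with $\tfrac1{nf(n)}$ summable, run the whole argument with $f$ replaced by $\tilde f(n):=\max(1,\lfloor cf(n)\rfloor)$ (again monotone, with $\tfrac1{n\tilde f(n)}$ summable); since $\tilde f\le cf$ this yields $B\big(z,\tfrac1{n^3f(n)}\big)\subset B\big(z,\tfrac1{n^2f(n)}\big)\subset B\big(z,\tfrac{c}{n^2\tilde f(n)}\big)\subset C_n(z)$ for $n$ large, on a set of full $\mu$-measure (note this is in fact sharper by one power of $n$ than the stated bound, which it of course implies).

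\emph{Main obstacle, and the $G_\delta$ point.} The delicate part is Step~1 — making the distortion estimate, and in particular the role of the factors $1/\sin\theta_k$ near near-tangential reflections, uniform and honest — which is where Katok's convexity of the $n$-cells and careful bookkeeping in the unfolded picture enter. Given the full-measure set above, the remaining assertion that $G$ may be taken a dense $G_\delta$ (density being automatic from full measure) requires an additional argument which I do not detail here; one notes that the good set is naturally an increasing union of the closed sets $\{z:\Phi(S^jz)\ge c/(m\,jf(j))\ \text{for all}\ j\}$.
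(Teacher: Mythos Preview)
Your overall strategy matches the paper's --- a geometric inradius lemma plus Borel--Cantelli --- but your Step~2 is genuinely sharper than the paper's argument. The paper applies Borel--Cantelli to the events ``the orbit up to time $n$ enters an $a_n$-neighborhood of the singular set,'' which incurs a union bound $\mu(\text{bad}_n)\lesssim n\cdot a_n$; summability then forces $a_n\sim 1/(n^2f(n))$, and the linear spreading in Proposition~\ref{p:9} costs another factor $n$, giving inradius $\sim 1/(n^3f(n))$. You instead apply Borel--Cantelli to the \emph{single-time} events $\{\Phi(S^jz)<1/(jf(j))\}$, so no union bound over $k\le n$ is needed; this yields $\min_{k\le n}\Phi(S^kz)\ge 1/(nf(n))$ eventually, hence inradius $\gtrsim 1/(n^2f(n))$ for \emph{all} large $n$ --- one full power of $n$ better than Theorem~\ref{t1} as stated, and in fact stronger than what the paper proves (compare Theorem~\ref{t2}, which obtains this order only along a subsequence, though under a weaker hypothesis on $f$). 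Your $\tilde f$ detour is unnecessary, since $1/(n^3f(n))\le c/(n^2f(n))$ once $n\ge 1/c$.

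There is, however, a real gap in Step~1 when $Q$ is \emph{non-convex}. The assertion ``$z'\in C_n(z)$ once its unfolded straight trajectory crosses each $E_k$ in the interior'' is false in that case: a free-flight segment of the orbit of $z$ can pass arbitrarily close to a re-entrant vertex $v$ lying on neither of the two sides it joins (exactly the situation in Figure~\ref{fig3}, center and right), so a small perturbation hits a different side first even though every $\Phi(S^kz)=\mathrm{dist}(s_k,V)\sin\theta_k$ is bounded well away from zero. In other words, $\Phi$ does not see proximity to the singular set $\mathcal{S}$ coming from intermediate vertices. The paper handles this by tracking the flow and using the sets $B_a$ of \eqref{e:Ba}, proving $\mu(B_a)\le k_1a$ also in the non-convex case via the inclusion \eqref{e5}. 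Your proof repairs immediately if you replace the events $\{\Phi(S^jz)<\varepsilon_j\}$ by $\{S^jz\in B_{\varepsilon_j}\}$ and correspondingly strengthen the hypothesis of your geometric lemma to control the full flight segment; your improved Borel--Cantelli in Step~2 then survives unchanged. The deferred $G_\delta$ point is a separate issue and is also treated somewhat lightly in the paper.
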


As previously mentioned 
the partition into cylinders of length 
is a generating partition, thus we get 
the following corollary.

\begin{corollary}\label{c0}
The upper metric complexity or upper slow entropy of the billiard shift is at most $n^6 (f(n))^2$ where
$f: \mathbb{N} \to \mathbb{N}$ is any monotonically increasing function 
such that $\frac{1}{n f(n)}$ is summable.
\end{corollary}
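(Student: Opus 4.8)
The plan is to transport Theorem~\ref{t1} to the symbolic side: for almost every $z$ the $n$-cell $C_n(z)$ contains a $d$-ball of radius $\approx n^{-3}$, so $O(n^{6})$ many $n$-cells suffice to cover a subset of $P_Q$ of $\mu$-measure close to $1$; and an $n$-cell, read in $\Sigma_Q$, is a cylinder on which the Hamming distance $\bar d$ to its own center is identically $0$, hence is contained in a single ball $\mathcal{B}(\,\cdot\,,n,\e)$. Pushing these cells forward by the coding map therefore exhibits a subset of $\Sigma_Q$ of $\nu$-measure close to $1$ covered by $O(n^{6})$ Hamming balls, which is precisely a bound on $P(n,\e)$.

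In detail I would first fix the constants. Because $(P_Q,d)$ is a finite union of flat Euclidean rectangles (one per side, with $\theta$ ranging over an interval of length $\pi$), there is a constant $C_Q$ such that $P_Q$ can be covered by at most $C_Q\rho^{-2}$ balls of radius $\rho$ for every $\rho\in(0,1]$; it matters that this counts $d$-balls, so the degeneracy of the Liouville density $\sin\theta$ at grazing directions plays no role. Given the function $f$ of the statement I would apply Theorem~\ref{t1} not to $f$ but to $g(n):=\lceil f(n)/D\rceil$ with $D:=\lceil 4\sqrt{C_Q}\,\rceil$; this $g$ is still monotone with $\sum_n 1/(n\,g(n))\le D\sum_n 1/(n\,f(n))<\infty$, so Theorem~\ref{t1} yields a dense $G_\delta$ set $G\subset P_Q$ of full $\mu$-measure with $B(z,r_n)\subset C_n(z)$ eventually, where $r_n:=1/(n^{3}g(n))$. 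Now fix $\e>0$. Since the sets $\{z\in G:\ B(z,r_n)\subset C_n(z)\ \text{for all }n\ge N\}$ increase to $G$ as $N\to\infty$, there are $N$ and $G_\e\subset G$ with $\mu(G_\e)\ge(1-\e)\mu(P_Q)$ such that the inclusion holds for all $z\in G_\e$ and all $n\ge N$. For such $n$ cover $P_Q$ by $M\le C_Q(r_n/2)^{-2}=4C_Q n^{6}g(n)^{2}$ balls of radius $r_n/2$; any one of them meeting $G_\e$ lies in $B(z,r_n)\subset C_n(z)$ for a point $z\in G_\e$ it contains, so $G_\e$ is covered by at most $M$ cells. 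Passing to codes (with $\nu$ the normalized image of $\mu$ under the coding map), the corresponding $\le M$ cylinders have total $\nu$-measure at least $1-\e$, and each lies in a single Hamming ball $\mathcal{B}(\,\cdot\,,n,\e)$ because all its points share their first $n$ symbols; hence $P(n,\e)\le M=4C_Q n^{6}g(n)^{2}$. Finally, once $f(n)\ge D$ one has $4C_Q g(n)^{2}\le 4C_Q\big(2f(n)/D\big)^{2}=16C_Q f(n)^{2}/D^{2}\le f(n)^{2}$, so $P(n,\e)\le n^{6}f(n)^{2}$ for all large $n$; letting $n\to\infty$ and then $\e\to0$ gives $\lim_{\e\to0}\limsup_{n}P(n,\e)/\big(n^{6}f(n)^{2}\big)\le 1$, which is the claim.

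This is essentially bookkeeping on top of Theorem~\ref{t1}, and I do not foresee a genuine obstacle. The only two points asking for care are the ones indicated: working with the flat metric $d$ in the covering count, so that the estimate is cleanly $O(\rho^{-2})$ and unaffected by the vanishing of $\sin\theta$; and the constant-chasing that turns the $O(n^{6}f(n)^{2})$ produced by the covering into exactly $n^{6}f(n)^{2}$, which is why Theorem~\ref{t1} is fed $\lceil f/D\rceil$ rather than $f$ (harmless, since it only rescales the summability hypothesis by the constant $D$). One should also recall that the length-$1$ cylinder partition is generating for $\sigma_Q$ by \cite{GaKrTr}, which is what makes the $\bar d$-ball quantity $P(n,\e)$ used here the correct notion of metric complexity, equivalently upper slow entropy, for the billiard.
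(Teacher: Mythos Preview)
Your argument is correct and follows the same overall strategy as the paper: use Theorem~\ref{t1} to show that a set $G_\e$ of $\mu$-measure at least $(1-\e)\mu(P_Q)$ is covered by $O\big(n^6 f(n)^2\big)$ $n$-cells, and then observe that the corresponding cylinders in $\Sigma_Q$ are $\bar d$-balls of radius $0$, hence Hamming $\e$-balls for any $\e$.

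The counting step, however, is carried out differently. The paper argues via measure: from $B\big(z,1/(n^3 f(n))\big)\subset C_n(z)$ it infers $\mu(C_n(z))\ge \mathrm{const}/(n^6 f(n)^2)$, and then uses disjointness of distinct $n$-cells to bound their number by the reciprocal. You instead cover $P_Q$ by at most $4C_Q n^6 g(n)^2$ $d$-balls of radius $r_n/2$ and note that each ball meeting $G_\e$ sits inside a single cell. Your route has two advantages you correctly flag. First, because the covering count uses the flat metric $d$ rather than the Liouville density $\sin\theta\,d\theta\,ds$, the degeneracy of $\mu$ at grazing directions never enters; the paper's measure lower bound $\mu\big(B(z,r)\big)\ge \mathrm{const}\cdot r^2$ tacitly assumes $\sin\theta$ is bounded away from~$0$ on $G_N$, a point that would otherwise need a separate truncation. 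Second, by feeding Theorem~\ref{t1} the auxiliary function $g=\lceil f/D\rceil$ with $D=\lceil 4\sqrt{C_Q}\,\rceil$, you absorb the covering constant and obtain the literal bound $P(n,\e)\le n^6 f(n)^2$ for all large $n$, whereas the paper's argument yields only $P(n,\e)\le \mathrm{const}^{-1}\,n^6 f(n)^2$ and leaves the constant implicit. Both approaches rest on the same geometric input, but yours is the cleaner bookkeeping.
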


Our next result shows that the lower bound $1/(n^3f(n))$ in Theorem \ref{t1} can have rather large deviations.
\begin{theorem}\label{t2} Let $Q$ be a polygon and $f: \mathbb{N} \to \mathbb{N}$ be a monotonically increasing function.  Then 
there is a dense $G_\delta$-set $G \subset P_Q$ of full $\mu$ measure such that 
for  every $z \in G$ there exists an infinite strictly increasing sequence $n_i$  such that 
$$B\left (z,\frac{1}{n_i^2 f(n_i)} \right ) \subset C_{n_i}(z).$$
\end{theorem}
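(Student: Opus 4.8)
The plan is to package the statement via a joint Baire-and-measure argument, reduce everything to a single probabilistic estimate, establish that estimate by transporting the singularity set along the orbit using the $S$-invariance of $\mu$, and then treat the one genuinely substantial case ($f$ bounded) by a conditional refinement.

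Write $r_n(z):=\sup\{\varepsilon>0: B(z,\varepsilon)\subset C_n(z)\}$. It is defined and positive for $\mu$-a.e.\ $z$, is non-increasing in $n$, and — using Katok's description of $n$-cells — for all large $n$ it equals $\mathrm{dist}\bigl(z,\bigcup_{j\le n}\mathcal{S}_j\bigr)$, where $\mathcal{S}_j:=S^{-(j-1)}(\mathcal{S}_1)$ is the set of phase points at which $S^{j}$ is undefined while $S^{j-1}$ is defined, and $\mathcal{S}_1$ is the finite union of arcs of points aiming at a vertex. For $N\in\N$ put
$$A_N:=\{z: r_n(z)\le \tfrac{1}{n^2 f(n)}\ \text{for all}\ n\ge N\},\qquad U_N:=\bigcup_{n\ge N}\{z:\overline{B(z,\tfrac{1}{n^2f(n)})}\subset C_n(z)\}.$$
Each $U_N$ is open (cells are open and locally constant on their interiors), $U_N^{c}$ coincides with $A_N$ up to a $\mu$-null set, and since $\mu$ has full support, once $\mu(A_N)=0$ the set $U_N$ is open, dense and of full $\mu$-measure; then $G:=\bigcap_N U_N$ is a dense $G_\delta$ of full $\mu$-measure, and for $z\in G$ one extracts a strictly increasing sequence $n_i$ with $B(z,\tfrac{1}{n_i^{2}f(n_i)})\subset C_{n_i}(z)$ by applying, for each $N$, that $z\in U_N$. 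Thus it suffices to prove $\mu(A_N)=0$ for every $N$; equivalently, $\limsup_n n^2 f(n)\,r_n(z)>1$ for $\mu$-a.e.\ $z$.

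Denoting by $\mathcal N_\varepsilon(\cdot)$ the $\varepsilon$-neighbourhood, the proof of Theorem~\ref{t1} supplies, for a constant $C=C(Q)$, the estimate $\mu(\{z: r_n(z)\le\varepsilon\})\le C n^2\varepsilon$ for all $n\ge1,\ \varepsilon>0$: indeed $\{r_n\le\varepsilon\}=\bigcup_{j\le n}\mathcal N_\varepsilon(\mathcal{S}_j)$; since $\mathcal{S}_j=S^{-(j-1)}(\mathcal{S}_1)$, since $\mu$ is $S$-invariant, and since after $j-1$ reflections the geometric orbit length is at most $j\,\mathrm{diam}(Q)$ — so that, off a controllable set of small measure, $\|DS^{j-1}\|=O(j)$, a bound most transparent in the unfolded coordinates — one gets $\mu(\mathcal N_\varepsilon(\mathcal{S}_j))=O(j\varepsilon)$, and summing over $j\le n$ gives the claim. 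If $f(n)\to\infty$ this already finishes the proof with room to spare: $\mu(\{r_n(z)\le \tfrac{1}{n^2f(n)}\})\le C/f(n)\to0$, hence $\mu\bigl(\bigcup_{n\ge N}\{r_n>\tfrac1{n^2f(n)}\}\bigr)\ge\limsup_n\mu(\{r_n>\tfrac1{n^2f(n)}\})=\mu(P_Q)$, so $\mu(A_N)=0$. Since $A_N$ only grows when $f$ is replaced by a pointwise smaller function and $f\ge1$, the remaining — and only substantial — case is $f\equiv1$: we must show $\mu(A_N)=0$ for $A_N=\{z:r_n(z)\le n^{-2}\ \forall n\ge N\}$.

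For $f\equiv 1$ the global estimate merely yields $\mu(\{r_n\le n^{-2}\})\le C$, so one must exploit that infinitely often the $n$-cell fails to shrink for a long stretch. The approach I would take is to condition on the cell at a large level: fix $N$, let $M\ge N$, let $\mathcal C=C_M(z)$ be an $M$-cell, and bound, for $M'\ge M$, the conditional probability $\mu(\{z\in\mathcal C: r_{M'}(z)\le\varepsilon\})/\mu(\mathcal C)$. The singularity arcs meeting $\mathcal C$ at levels in $(M,M']$ are the pieces of $S^{-(j-1)}(\mathcal{S}_1)\cap\mathcal C$, $M<j\le M'$; transporting by $S^{j-1}$, which is one-to-one where it is defined on $\mathcal C$, and using $S$-invariance of $\mu$, one is reduced to estimating how the convex set $S^{j-1}(\mathcal C)$, of measure $\mu(\mathcal C)$, meets the fixed arcs $\mathcal{S}_1$; granting a non-concentration bound of the form $\mu(\mathcal N_{\delta}(\mathcal{S}_1)\cap S^{j-1}(\mathcal C))=O(\delta\,\mu(\mathcal C))$, taking $\delta=O(j\varepsilon)$ and summing over $j\le M'$ yields a conditional probability $O(M'^{2}\varepsilon)+O(M\,\varepsilon\,\mathrm{diam}(\mathcal C)/\mu(\mathcal C))$; choosing $\varepsilon=M'^{-2}$ and letting $M'\to\infty$ this tends to $O(1)$ and is made arbitrarily small by taking the conditioning level $M$ large — after first discarding the cells of exceptionally small measure, whose total $\mu$-mass at level $M$ is $O(M^2\eta)$ by the global estimate and hence negligible for a suitable threshold $\eta$. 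Since $A_N\subset\{r_{M'}(z)\le M'^{-2}\}$ for every $M'\ge N$, this gives $\mu(A_N\mid \mathcal C)$ arbitrarily small for $\mu$-almost every cell $\mathcal C$, whence $\mu(A_N)=\int \mu(A_N\mid \mathcal C)\,d\mu=0$. I expect the real obstacle to be precisely this localized non-concentration bound $\mu(\mathcal N_\delta(\mathcal{S}_1)\cap S^{j-1}(\mathcal C))=O(\delta\,\mu(\mathcal C))$ — i.e.\ showing that the forward image of a cell is not concentrated in a thin neighbourhood of the finitely many basic singularity arcs — which I would attempt to prove by working throughout in the unfolded billiard-flow picture, where cells and singularity arcs are honest convex objects and the return map has no derivative blow-up; everything else is bookkeeping or a routine adaptation of the Theorem~\ref{t1} estimate.
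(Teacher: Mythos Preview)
Your reduction to showing $\mu(A_N)=0$ and the basic estimate $\mu(\{r_n\le\varepsilon\})\le Cn^2\varepsilon$ are exactly what the paper does, only repackaged: the paper works with flow-defined ``good'' sets $G_a^{T_n}$ and shows via Proposition~\ref{p:9} that $z\in G_a^{T_n}$ forces $r_n(z)\ge a/(2+T_n)$, and via \eqref{e6} that $\mu(P_Q\setminus G_a^{T_n})\le k_3 T_n a$; setting $a=\varepsilon(2+T_n)$ this is precisely your $Cn^2\varepsilon$ bound. With $a_n=K/(nf(n))$ one gets $\mu(\{r_n\le 1/(n^2f(n))\})=O(1/f(n))$, and the paper then simply takes $G=\bigcap_{n_0}\bigcup_{n\ge n_0}G_{a_n}^{T_n}$, which is your $\bigcap_N U_N$ in different clothing.

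The substantive divergence is your long treatment of the case $f$ bounded. The paper does not attempt this case: its proof explicitly uses $\mu(G_{a_n}^{T_n})\to 1$, i.e.\ $1/f(n)\to 0$. The authors read ``monotonically increasing $\N\to\N$'' as strictly increasing, hence $f(n)\to\infty$ automatically; note that in the generalization (Theorem~\ref{t11}) they add the word ``unbounded'' explicitly. So for the statement as intended, only your one-line argument ``$\mu(\{r_n\le 1/(n^2f(n))\})\le C/f(n)\to 0$'' is needed, and there your proof and the paper's coincide.

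Your conditional argument for $f\equiv 1$ is therefore unnecessary for the theorem as stated, and it is also not a proof: as you yourself flag, the crucial step is the localized non-concentration bound $\mu(\mathcal N_\delta(\mathcal S_1)\cap S^{j-1}(\mathcal C))=O(\delta\,\mu(\mathcal C))$ uniformly in $j$ and in the cell $\mathcal C$, and you only sketch an approach (``work in the unfolded picture'') without carrying it out. For an arbitrary polygon the forward image $S^{j-1}(\mathcal C)$ is a long thin convex set whose aspect ratio is $\Theta(j)$, and there is no a priori reason it should not align with one of the finitely many arcs of $\mathcal S_1$; making this quantitative would require a genuinely new input beyond anything in the paper. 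In short: your proof of the theorem as the paper intends it is correct and essentially the paper's own; the extra material aims at a stronger statement than the paper claims and does not establish it.
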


Theorems \ref{t1} and \ref{t2} have analogs for arbitrary invariant measures as well as for  higher dimensional polyhedra, these results will be stated in Section  \ref{s:6}. 

\section{Complexity of typical polygonal billiards}

\subsection{Estimates in small neighborhoods of a rational polygon}
We prove Theorem \ref{t5} by approximating irrational billiards by rational billiards.
To do this we need to give lower and upper bounds on 
how many saddle connections of length $n$ or how many $n$-cells 
can occur in a polygon close to a given rational polygon.  
Thus in this section we will emphasize the $Q$ dependence of 
the functions $N^Q_c(n)$ and $p^Q(n)$.

\begin{proposition}\label{p:1}
Suppose $Q$ is a rational polygon.  Then there exist 
constants  $0 < \underline{C} \le \overline{C}$ and $0< \underline{K} \le \overline{K}$ such that for each $n_0 \ge 1$
and each $\e \in (0,1)$ there
exists an open set $U$ whose closure contains $Q$ such that 
$$ \underline{C} n^2 \le N_c^{\hat{Q}}(n) \le (\overline{C}/\e) n^{2 + \e}
\quad \hbox{ and   } \quad 
\underline{K} n^3 \le p^{\hat{Q}}(n) \le (\overline{K}/\e) n^{ 3 + \e}$$ for all $\hat{Q} \in U$ and all $1 \le n \le n_0$.
\end{proposition}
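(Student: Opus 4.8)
The strategy is to combine two facts: first, the known polynomial bounds for rational polygons (Masur's quadratic bounds on $N_g$, hence on $N_c$, and the Cassaigne–Hubert–Troubetzkoy formula \eqref{e1} giving the cubic bounds on $p$); second, a finite-time semicontinuity argument showing that the combinatorial data controlling the first $n$ bounces — which sides are hit and in what order, i.e.\ the $n$-cell structure — is stable under small perturbations of the polygon when $n$ is capped at a fixed $n_0$. Since we only require the inequalities for $1 \le n \le n_0$, we never need a uniform-in-$n$ perturbation estimate, which is what makes the argument tractable.

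I would proceed as follows. \textbf{Step 1 (bounds for $Q$ itself).} Since $Q$ is rational, Masur's theorem gives $0 < \liminf N_g^Q(t)/t^2 \le \limsup N_g^Q(t)/t^2 < \infty$; an elementary comparison of combinatorial and geometric length (each link has length bounded above by the diameter and, away from corners, below by a positive constant — and saddle connections passing near corners can be controlled) transfers this to $\underline{c} n^2 \le N_c^Q(n) \le \overline{c}\, n^2$ for suitable constants. By the formula $p^Q(n) = \sum_{j=0}^{n-1} N_c^Q(j)$ valid for simply connected polygons, summing yields $\underline{k} n^3 \le p^Q(n) \le \overline{k}\, n^3$. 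Choose the constants $\underline{C},\underline{K}$ in the statement to be (say) half of $\underline{c},\underline{k}$, and $\overline{C},\overline{K}$ to be twice $\overline{c},\overline{k}$; the factor $1/\e$ and the exponent $\e$ in the upper bounds give us room to spare, so it suffices to get, for $\hat Q$ near $Q$, that $N_c^{\hat Q}(n)$ and $p^{\hat Q}(n)$ stay within a bounded factor of their values at $Q$ for $n \le n_0$.

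\textbf{Step 2 (lower bound persists under perturbation).} Fix $n_0$. The $n_0$-cells of $Q$ form a finite partition (up to the singularity set) of $P_Q$ into finitely many convex pieces, and each corresponds to a concrete finite orbit of the billiard map with transversal, non-degenerate reflections away from corners (aperiodic cells survive; one only needs enough of them). Each such orbit depends continuously on the vertex positions of the polygon, so for $\hat Q$ in a sufficiently small neighborhood $U_1$ of $Q$ the same combinatorial words of length $\le n_0$ are still realized; hence $p^{\hat Q}(n) \ge p^Q(n) \ge \underline{k} n^3$ and similarly, tracking those saddle connections whose endpoints are corners and whose links are transversal, $N_c^{\hat Q}(n) \ge \underline{C} n^2$, for all $n \le n_0$. \textbf{Step 3 (upper bound persists under perturbation).} This is the delicate direction: a priori a perturbation could split one $n$-cell of $Q$ into several $n$-cells of $\hat Q$, increasing the counts. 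The point is that by Katok's lemma each $n$-cell of $\hat Q$ has at most $Kn$ sides, and each side lies on a branch of the order-$\le n$ singularity set; the number of such branches, and the number of ways $Kn$ of them can bound a convex region, is controlled. More carefully, one shows that the map $\hat Q \mapsto (\text{combinatorial structure of the } \le n_0 \text{ singularity branches})$ is locally constant off a codimension-one set and upper-semicontinuous in the relevant sense, so on a possibly smaller neighborhood $U_2 \subset U_1$ the number of $n$-cells and the number of saddle connections of combinatorial length $\le n$ cannot exceed (a fixed multiple of) their values at $Q$, for all $n \le n_0$. Take $U = U_2$; its closure contains $Q$ by construction. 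The main obstacle is precisely Step 3: ruling out a sudden proliferation of cells under perturbation. I expect this to be handled by the finiteness and transversality inherent in the order-$\le n_0$ singularity set — finitely many algebraic constraints in the vertex coordinates — so that outside a measure-zero bad set the structure is locally rigid, and one simply chooses $\hat Q$ (equivalently $U$) to avoid enlarging the finitely many branch incidences that produce cells, which does not decrease the number of valid perturbations and leaves the closure-contains-$Q$ requirement intact.
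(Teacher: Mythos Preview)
Your Steps 1 and 2 are fine and match the paper: Masur's bounds give the constants for $Q$, and the persistence of saddle connections under small perturbation (your Step 2) is exactly the content of Lemma~\ref{l:1}.

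Step 3, however, has a genuine gap, and it is precisely the point where the proposition is nontrivial. Your claim that ``$N_c^{\hat Q}(n)$ stays within a bounded factor of $N_c^Q(n)$ for $n\le n_0$'' is not justified and is very likely false with a constant independent of $n_0$. The mechanism you miss is \emph{saddle chains}: in a rational polygon, many saddle connections line up collinearly in an unfolding. Under \emph{any} generic perturbation these chains break, and every pair of non-adjacent endpoints of a chain of $j$ segments can become a new saddle connection in $\hat Q$, producing up to $\binom{j+1}{2}-j\approx j^2$ new ones. Your ``locally constant off a codimension-one set'' picture is correct as a description of the parameter space, but useless here: the rational polygon $Q$ sits \emph{on} that codimension-one stratum (that is exactly what makes it rational), so every open $U$ with $Q$ in its closure consists of polygons in which the chains have been broken. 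You cannot ``choose $U$ to avoid enlarging the branch incidences'' while keeping $Q$ in $\overline U$.

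This is why the exponent $2+\e$ and the factor $1/\e$ appear in the statement: they are not slack but the actual price of the chain-breaking. The paper's proof (Lemma~\ref{l:8}) controls the damage by stratifying saddle chains of length $\le n$ into $K=\lceil 2/\e\rceil$ groups according to the length $n^a$, $a\in(\tfrac{i-1}{K},\tfrac{i}{K}]$, of their shortest segment; group $i$ contains at most $2\overline{C}n^{2a_i}$ chains, each generating at most $(2n/n^{a_{i-1}})^2$ new saddle connections, so each group contributes $O(n^{2+2/K})$ and the total is $O(Kn^{2+2/K}) = O((1/\e)n^{2+\e})$. Your outline needs to replace the semicontinuity hand-wave with this (or an equivalent) counting argument.
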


We begin by a simple Lemma which proves the lower bound of the Proposition. 
\begin{lemma}\label{l:1}
Fix a polygon $Q$ and $n_0 \ge 1$, then there exists $\delta > 0$ such that  $N_c^{\hat{Q}}(n) \ge N_c^{Q}(n)$ and $p^{\hat{Q}}(n) \ge p^Q(n)$
for all  $1 \le n \le n_0$ and all 
$\hat{Q} \in B(Q,\delta)$.
\end{lemma}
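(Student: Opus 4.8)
The plan is to prove that, for each fixed $n_0$, both $N_c^Q(n)$ and $p^Q(n)$ (for $1 \le n \le n_0$) are \emph{lower semicontinuous} in the polygon. The words of length at most $n_0$ occurring in $Q$ correspond to nonempty open $n$-cells, so they persist under any sufficiently small perturbation of the vertices; and the finitely many saddle connections of combinatorial length at most $n_0$ in $Q$ are not destroyed by a small perturbation, only deformed. Since $p^Q(n_0)$ and $N_c^Q(n_0)$ are finite, one may then take a single $\delta>0$ that works for all of these objects at once. Throughout I use the bijection between the vertices of $\hat Q\in B(Q,\delta)$ and those of $Q$ — and hence between their sides — supplied by the definition of the topology on polygons (for small $\delta$ each side of $\hat Q$ is close to the corresponding side of $Q$), and I label the sides of $\hat Q$ accordingly, so that codings in $Q$ and in $\hat Q$ are words over the same alphabet $\A$.

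\emph{The complexity bound.} Fix a word $w$ with $1\le|w|\le n_0$ that occurs in $Q$ and choose $z\in P_Q$ whose coding along its first $n:=|w|$ steps equals $w$; then $S^jz$ is defined for $0\le j\le n$, and $z$ lies in the open $n$-cell $C_n(z)$. Each $S^jz$ is an interior point of a side at which the orbit reflects transversally, so this finite orbit segment stays a positive distance from the vertices of $Q$. As the billiard map depends jointly continuously on the phase point and on the polygon away from the singularity set, for every $\hat Q$ close enough to $Q$ there is a point $\hat z\in P_{\hat Q}$ near $z$ whose first $n$ reflections hit the sides of $\hat Q$ labelled by $w$ in the same order, i.e.\ $w$ occurs in $\hat Q$. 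Taking $\delta$ to be the minimum, over the finitely many words of length $\le n_0$ occurring in $Q$, of the thresholds produced this way yields $p^{\hat Q}(n)\ge p^Q(n)$ for all $1\le n\le n_0$ and all $\hat Q\in B(Q,\delta)$.

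\emph{The saddle connection bound.} Fix a saddle connection $\gamma$ in $Q$ of combinatorial length $j\le n_0$, running from a vertex $v$ to a vertex $v'$ with reflections off sides $a_1,\dots,a_{j-1}$ at interior points and transversally; let $\phi_\gamma$ be the direction of its first link at $v$. Let $a_j$ be one of the two sides of $Q$ meeting at $v'$, chosen so that $\gamma$ does not enter $v'$ in a direction parallel to $a_j$ (such a choice exists since the two sides at $v'$ are not parallel). For $(\phi,\hat Q)$ near $(\phi_\gamma,Q)$, let $E(\phi,\hat Q)$ denote the point at which the $j$-th link of the billiard trajectory in $\hat Q$ that starts at the vertex $\hat v$ corresponding to $v$ in direction $\phi$ and reflects in turn off $\hat a_1,\dots,\hat a_{j-1}$ meets the line carrying $\hat a_j$; this is well defined and smooth in $(\phi,\hat Q)$, and $E(\phi_\gamma,Q)=v'$. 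Unfolding the reflections turns $\phi\mapsto E(\phi,Q)$ into the intersection point of a line through $v$ rotating about $v$ with a fixed line not passing through $v$, and such an intersection point moves at nonzero speed; hence $\partial_\phi E(\phi_\gamma,Q)\ne0$. By the implicit function theorem there is, for every $\hat Q$ close to $Q$, a direction $\hat\phi$ with $E(\hat\phi,\hat Q)$ equal to the vertex $\hat v'$ of $\hat Q$ corresponding to $v'$; for $\delta$ small this trajectory still reflects at interior points of the prescribed sides, so it is a saddle connection in $\hat Q$ of combinatorial length exactly $j$. The finitely many saddle connections of $Q$ of length $\le n_0$ lie at positive pairwise distance, so for $\delta$ small the perturbed ones are again pairwise distinct; taking $\delta$ to be the minimum of the corresponding thresholds gives $N_c^{\hat Q}(n)\ge N_c^Q(n)$ for all $1\le n\le n_0$, and intersecting with the $\delta$ from the complexity bound finishes the proof.

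I expect the only real obstacle to be the nondegeneracy $\partial_\phi E\ne0$, i.e.\ confirming that a saddle connection cannot be annihilated by the perturbation but only deformed; the complexity half is essentially automatic since an $n$-cell is open and the billiard map varies continuously with the table off its singularity set, and the reduction from one word or one saddle connection at a time to a single $\delta$ is just finiteness of $p^Q(n_0)$ and $N_c^Q(n_0)$. One cannot bypass the saddle connection bound via the complexity bound and \eqref{e1}, because that identity expresses $N_c^Q(n)$ as the difference $p^Q(n+1)-p^Q(n)$, and lower semicontinuity of $p^Q$ does not descend to such differences.
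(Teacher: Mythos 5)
Your proof is correct, and the key idea --- persistence of the finitely many saddle connections of combinatorial length $\le n_0$, seen in the unfolding --- is the same as the paper's. There are two differences worth noting. First, the paper does not prove the complexity inequality separately: it deduces $p^{\hat Q}(n)\ge p^Q(n)$ from $N_c^{\hat Q}(j)\ge N_c^Q(j)$ by summing \eqref{e1}, which is a sum of nonnegative terms, so lower semicontinuity \emph{does} pass from $N_c$ to $p$ in that direction. Your closing remark is right that the reverse implication (from $p$ to $N_c$, which would require controlling a difference) fails, but that is not the direction needed, so your direct open--$n$-cell argument for $p$, while perfectly valid, is extra work. Second, for the persistence of a saddle connection the paper argues more directly: in the unfolding, $\gamma$ is a straight segment joining two vertices and staying a strictly positive distance from all other vertices of the unfolded picture; since the unfolding varies continuously with the polygon, the straight segment joining the two perturbed endpoint vertices still crosses the same sides in their interiors and avoids all other vertices, hence is a saddle connection with the same code. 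Your rotating-ray/implicit-function-theorem argument establishes the same nondegeneracy with more machinery; it is sound (the transversality $\partial_\phi E\ne 0$ you worry about is exactly what the unfolding makes obvious), but you could have skipped the IFT entirely by taking the perturbed connection to be the segment between the perturbed vertices rather than solving for a direction that hits the target vertex.
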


\begin{proof}
The second inequality follows from the first inequality by \eqref{e1}.
Consider a saddle connection $\gamma$ of combinatorial length at most $n$.
Consider it in its unfolding.  Except for the starting and ending vertices
the distance of $\gamma$ to the other vertices is strictly positive. 
The unfolding varies continuously with the 
polygon in the sense that the vertices of the unfolded picture vary continuously, thus we can choose a small neighborhood of $Q$ for which $\gamma$ persists.  
The lemma follows since the collection of saddle connections of combinatorial length at most $n_0$ is finite.
\end{proof}

For the upper bound we need to work quite a bit more.
A {\em saddle chain} is a finite union of saddle connections which are on  
a straight line in an unfolding. A crucial point is
that under perturbation saddle chains can create new saddle connections.


\begin{lemma}\label{l:8}
Suppose $Q$ is a rational polygon with $\ell$ sides,  such that
$N^Q_c(n)\le \overline{C} n^2$ for all $n \ge 1$. Fix $n_0$ and $\e \in (0,1)$,  then there exists an open set of polygons $U$  containing $Q$ such that 
$$N_c^{\hat{Q}}(n) \le  \frac{9\overline{C}}{\e} n^{2 + \e} \text{ for all } \hat{Q} \in U
\text{ and } 1 \le n \le  n_0.$$
\end{lemma}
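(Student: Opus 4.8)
The strategy is to bound the number of saddle connections in a perturbed polygon $\hat Q$ by the number of saddle \emph{chains} in the unperturbed rational polygon $Q$, using the fact that a saddle connection of combinatorial length $n$ in $\hat Q$, when we let $\hat Q$ collapse back to $Q$, limits onto a saddle chain in $Q$ (several genuine saddle connections of $Q$ concatenated along a straight line in the unfolding, since at the intermediate collapsed vertices the orbit passes through a corner). The combinatorial length of the limiting chain is still at most $n$, but it can be built out of up to $n$ shorter saddle connections of $Q$. Conversely, each saddle chain of $Q$ of combinatorial length $\le n$ can, under a sufficiently small perturbation, give rise to only a \emph{bounded} number of nearby saddle connections of $\hat Q$ of the same combinatorial length — this boundedness is what the openness of $U$ and the finiteness at each scale buys us, as in Lemma \ref{l:1}.

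First I would make the notion of saddle chain quantitative: a saddle chain of combinatorial length $n$ in $Q$ is determined by a choice of $k$ saddle connections $\gamma_1,\dots,\gamma_k$ of $Q$ with combinatorial lengths $n_1,\dots,n_k$ summing to at most $n$ (plus the $k-1$ intermediate vertices they pass through, of which there are at most $\ell$ choices each, and a bounded geometric choice of how the straight line continues past each vertex — at most $\ell$ again, or in fact at most a fixed constant depending on the local picture). Hence the number of saddle chains of combinatorial length $\le n$ is at most
$$
\sum_{k\ge 1}\ \ell^{2(k-1)} \sum_{n_1+\cdots+n_k\le n} \prod_{j=1}^k N_c^Q(n_j)
\ \le\ \sum_{k\ge 1}\ \ell^{2(k-1)} \sum_{n_1+\cdots+n_k\le n} \prod_{j=1}^k \overline C\, n_j^2 .
$$
The inner sum over compositions is the key combinatorial estimate. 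Since $N_c^Q(n)\le \overline C n^2$, each factor is at most $\overline C n^2$, and I would instead bound $\prod n_j^2 \le n^{2}\cdot\big(\prod_{j<k} n_j^2\big)$ crudely, or better, use $\sum_{n_1+\cdots+n_k\le n}\prod n_j^2 \le n^{2}\cdot\binom{n}{k-1}$-type bounds; after summing the geometric series in $k$ and optimizing, the dominant contribution is from bounded $k$, and one extracts a bound of the shape $(\text{const}/\e)\, n^{2+\e}$, where the $1/\e$ appears exactly as in the statement from summing $\sum_k (\text{something})^k$ against a geometric series whose ratio is controlled once $n \le n_0$ forces $k = O(\log n)$. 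The factor $9$ in the statement presumably comes from tracking the explicit constants $\ell^{2}$, the number of ways the line continues at a vertex, and the geometric sum.

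Then I would invoke Lemma \ref{l:1}'s mechanism in the reverse direction: having fixed $n_0$, the set of saddle chains of $Q$ of combinatorial length $\le n_0$ is finite, and for each one there is $\delta > 0$ such that in $B(Q,\delta)$ the only saddle connections of $\hat Q$ of combinatorial length $\le n_0$ are small perturbations of those chains, with at most a fixed bounded multiplicity per chain (the multiplicity is bounded because, locally near a straight saddle chain, the perturbed vertices move the nearby would-be saddle connections by continuous functions, and only finitely many combinatorial types fit in a $\delta$-window — shrinking $\delta$ makes this multiplicity $1$ per combinatorial type, of which there are boundedly many). Taking $U = B(Q,\delta)$ for the common $\delta$ over this finite list, we get $N_c^{\hat Q}(n) \le C_\ell \cdot \#\{\text{saddle chains of } Q,\ \text{comb.\ length} \le n\} \le (9\overline C/\e)\, n^{2+\e}$ for all $1\le n\le n_0$.

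\textbf{Main obstacle.} The delicate point is the claim that, under a small perturbation, a given saddle chain of $Q$ spawns only \emph{boundedly many} (independent of $n$) saddle connections of $\hat Q$ of comparable combinatorial length; a priori a long nearly-degenerate configuration could unfold into many distinct nearby connections. Controlling this requires the transversality/continuity input behind Katok's structure of $n$-cells (each side of an $n$-cell is a branch of the singularity set, varying continuously with $Q$) together with the choice of $U$ depending on $n_0$, so that within the relevant scale the number of combinatorial types near each chain is bounded by a constant depending only on $\ell$. This is exactly the "crucial point" the authors flagged — that saddle chains create new saddle connections under perturbation — and making the creation count at most $O_\ell(1)$ per chain is where the real work lies; everything else is the combinatorial bookkeeping of the displayed sum and a geometric series.
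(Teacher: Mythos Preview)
Your proposal has a genuine gap in the combinatorial estimate. The displayed sum
\[
\sum_{k\ge 1} \ell^{2(k-1)} \sum_{n_1+\cdots+n_k\le n} \prod_{j=1}^k \overline{C}\, n_j^2
\]
is not polynomial in $n$: taking $k=n$ and all $n_j=1$ already contributes a term of order $(\ell^2\overline{C})^{n}$, which is exponential. The reason is that a saddle chain is \emph{not} an arbitrary $k$-tuple of saddle connections with prescribed lengths plus vertex choices; once the first link and a direction are fixed, the entire straight line in the unfolding --- and hence every subsequent link --- is determined, so the independent factors $N_c^Q(n_j)$ and the $\ell^{2(k-1)}$ are a vast overcount. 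Your claim that the dominant contribution comes from bounded $k$, or that $n\le n_0$ somehow forces $k=O(\log n)$, is unsupported: a chain of combinatorial length $n$ can perfectly well consist of $n$ links of length one. Even with a correct count of chains (at most $2N_c^Q(n)\le 2\overline{C}n^2$ maximal chains of length $\le n$, each carrying at most $n+1$ vertices and hence at most $\binom{n+1}{2}$ sub-chains), the bound you would extract is only $O(n^4)$, not $n^{2+\e}$.

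The paper's argument supplies the missing idea, and it is not the one you conjecture. It does \emph{not} claim that a chain spawns only $O_\ell(1)$ new saddle connections; on the contrary, a chain with $j$ links can create up to $\binom{j+1}{2}-j\le j^2$ of them (one for each non-consecutive pair of its $j+1$ endpoints). The trick is to enumerate chains by their \emph{shortest} link. Partition $[0,1]$ into $K=\lceil 2/\e\rceil$ equal intervals with endpoints $a_i=i/K$; if the shortest link has length in $(n^{a_{i-1}},n^{a_i}]$, then there are at most $2N_c^Q(n^{a_i})\le 2\overline{C}n^{2a_i}$ choices for that link (which essentially determines the chain), while the chain has at most $2n/n^{a_{i-1}}$ links and hence creates at most $(2n/n^{a_{i-1}})^2$ new saddle connections. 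The product is $4\overline{C}n^{2+2/K}$ per group, and summing over the $K$ groups yields $(9\overline{C}/\e)n^{2+\e}$. This balancing of ``few chains with many links'' against ``many chains with few links'' is precisely what turns the naive $n^4$ into $n^{2+\e}$; nothing in your outline replaces it.
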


\begin{proof} 
Throughout the proof length will refer to 
combinatorial length. We need to understand how  new saddle connections can arise when perturbing $Q$. We claim the new saddle connections can only arise from a saddle chain.
To see this fix
a polygon $Q_0$ and a small enough neighborhood $U$ of $Q_0$ so that we
can identify sides and vertices of all the polygons in $U$. Consider 
a continuous one parameter family $\{Q_t \in U : t \in [0,1)\}$.
We fix a labelling of the vertices and edges of all the $Q_t$.
Suppose that there is  a saddle connection $\gamma^t$ connecting
a vertex $v_1$ to a vertex $v_2$ with code $w$
in all $Q_t$ for $t \in (0,1)$ but that this saddle connection
does not exist in $Q_0$.
We consider convergence in the unfolded picture, and use the same symbols for saddle connection in the unfolding and in the polygon.
Consider the unfoldings of $Q_t$ along $\gamma^t$. Both $\gamma^t$ and the associated unfoldings
vary continuously with $t$.{\footnote{More precisely, the unfolding can be viewed as a polygon in the plane, and the vertices of this polygon vary continuously with $t$.}} 
As $t\to 0$, the connections $\gamma^t$  converge to a straight line segment 
$\gamma^0$ connecting $v_1$ to $v_2$ in the unfolding of $Q_0$, since by assumption this is not a saddle connection there must be one or more vertices in between the two vertices, i.e., the saddle connection $\gamma^t$ has degenerated to a saddle chain $\gamma^0$, which proves our claim.
We remark that a whole side of the polygon can block the saddle connection, thus for the purpose of this proof a side of a polygon is considered to be a saddle connection, remember that this is not the case in
the Cassaigne, Hubert, Troubetzkoy  formula, this plays no role in our estimates since there are only finitely many such saddle connections or saddle chains.

We would like to choose the perturbation of the polygon  small enough that a saddle chain of length strictly less than $n_0$ does not create a new saddle chain of length at most $n_0$, however this turns out impossible in general, and we allow the creation
of certain special saddle chains as we will now explain.
Consider any saddle connection $\gamma$ of length $n < n_0$,
and let $\gamma^\pm$ denote its two forward continuations of length $n_0$.  
Note that either of these could be a saddle chain.
Fix one of these, say $\gamma^+$ and consider the associated unfolding.  
Let $V_{\gamma^+}$ denote the set of vertices of the unfolding contained in 
$\gamma^+$ and $V'$ denote the vertices of the unfolding not in $\gamma^+$
Consider the collection $SC(\gamma^+)$ of all saddle connections in the unfolding connecting vertices in $V_{\gamma^+}$ to vertices in $V'$.

Let $\Theta(Q,\gamma^+)$ be the minimal angle between the direction of
$\gamma^+$ and the directions of $SC(\gamma^+)$. 
Note that the $\gamma^+$ dependence of $\Theta$ is actually on dependence on  the unfolding of $\gamma^+$.
By assumption $\Theta(Q,\gamma^+) > 0$, furthermore as long a $\Theta$ remains strictly positive it varies continuously with the perturbation.
We require  $\delta$ to be so small that  for all $\hat Q \in U$ 
$\Theta(\hat{Q},\gamma^+) > \Theta(Q,\gamma^+)/2$, i.e., if any new saddle chain is created in only visits vertices from $V_{\gamma^+}$.
An example of such a new saddle chain is shown in the Figure \ref{fig1}, 
our estimates are worst case estimates for which we have counted this new saddle chain as having created all possible saddle connections. 
The final choice of $\delta$ must satisfy this requirement for each saddle connection of length at most $n_0$ and of course for $\gamma^-$ as well.


\begin{figure}   
\begin{minipage}[ht]{0.49\linewidth}
\centering
\begin{tikzpicture}[scale=1]
\draw[] (0,0.5) -- (0,1) -- (-0.2,1) -- (-0.2,2) -- (0,2) -- (0,2.5) -- (1,2.5) -- (1,2) -- (2,2) -- (2,1) -- (2.2,1) -- (2.2,0) -- (1,0) -- (1,0.5) -- (0,0.5);
\draw[thick, dotted, red] (1,0) -- (2,1) -- (1,2) -- (0,1);
\end{tikzpicture}
\end{minipage}\nolinebreak
\begin{minipage}[ht]{0.49\linewidth}
\centering
\begin{tikzpicture}[scale=1]
\draw[] (0,0.5) -- (0,1) -- (-0.2,1) -- (-0.2,2) --  (0,2) -- (0,2.5) -- (1,2.5) -- (1,2) -- (2,2) -- (2,0.8) -- (2.2,0.8) -- (2.2,0) -- (1,0) -- (1,0.5) -- (0,0.5) ;
\draw[thick, dotted, red] (1,0) -- (2,1) -- (1,2) -- (0,1); 
\end{tikzpicture}
\end{minipage}
\caption{After a small perturbation a saddle chain consisting of three saddle connections of  length 1 (left) can create a saddle chain consisting of a saddle connection of length two and a saddle connection of length one (right).
}\label{fig1}
\end{figure}
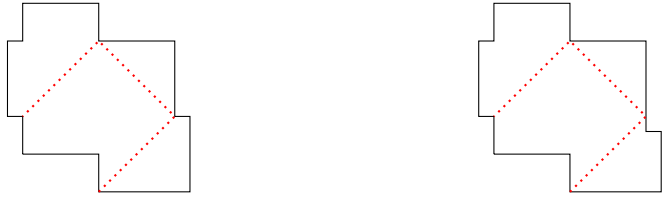

Thus to make our estimate we need to understand how many 
saddle chains there are and how many saddle connections a saddle chain can create under perturbation. Figure \ref{fig2}
depicts such a situation, a saddle chain consisting of three saddle connections of length 1 creates two additional saddle connections of length 2 and one additional saddle connection of length 3.



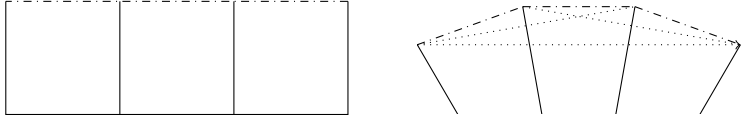
\begin{figure}
\def\deg{20}   
\begin{minipage}[ht]{.5\linewidth}
\centering
\begin{tikzpicture}[scale=1.5]
\draw[]  (2,1) -- (2,0) -- (1,0) -- (0,0) -- (-1,0) -- (-1,1);
\draw[dash dot] (2,1) -- (1,1) -- (0,1) -- (-1,1);
\draw[] (0,1) -- (0,0);
\draw[] (1,1) -- (1,0);
\end{tikzpicture}
\end{minipage}\nolinebreak
\begin{minipage}[ht]{.5\linewidth}
\begin{tikzpicture}[scale=4.25]
 \clip(-0.5,0.65) rectangle (.8,1);
\foreach \x in {3*\deg, 4*\deg, 5*\deg, 6*\deg}
{\draw[thin,-] (0:0)--(\x:1);}
\foreach \x in {4*\deg, 5*\deg, 6*\deg}
{\draw[dash dot, -] (\x:1) -- (\x-\deg:1);}
\foreach \x in {5*\deg, 6*\deg}
{\draw[dotted, -] (\x:1) -- (\x-2*\deg:1);}
\foreach \x in {6*\deg}
{\draw[dotted, ->] (\x:1) -- (\x-3*\deg:1);}
\draw[fill=white, white] (0,0) circle [radius=0.6cm];
\end{tikzpicture}
\end{minipage}
\caption{A saddle chain consisting of 3 saddle connections (dash-dotted on the left) can create 3 additional connections (dotted on the right).}\label{fig2}
\end{figure}

Now we turn to our estimate. 
The number of 
saddle chains in $Q$ of length
at most $n$ is bounded above by $2N^Q_c(n)$
since any saddle connection in $Q$ can be the start of at most 2 saddle chains.  

Consider a saddle chain consisting of $j$ saddle
connections in a polygon $Q$. Consecutive pairs of the $j+1$ endpoints are by 
definition connected by a saddle connection in $Q$.
Each non-consecutive pair of the $j+1$ endpoints 
of the $j$ saddle  connections can possibly create a new
saddle connection (such as in Figure \ref{fig2}).
Thus  a 
saddle chain consisting of $j$ saddle connections can create at most $\binom{j+1}{2} - j\le  j^2$ 
new saddle connections.

Fix $\e \in (0,1)$, $n_0 \ge 1$. Let $K = \lceil {2/\e} \rceil $ and consider $1 \le n \le n_0$.
We split the saddle chains of combinatorial length
at most $n$ into $K \ge 2$ classes according to the combinatorial length of the
shortest saddle connection in the saddle chain.
For each $1 \le i \leq K$ let $a_{i}:=\frac{i}{K}$. Then
let $I_1 := [0,a_1]$ and for $2 \le i \le K$ let $I_{i}:=\left( a_{i-1},a_{i}\right]$. A chain is in group $i$ if the shortest
generalized diagonal in the chain is of combinatorial length $n^{a}$ 
with $a\in I_{i}.$ For small $n$ some of these groups can be empty, but 
that does not effect our upper bound.

We would like to enumerate saddle chains by
the shortest saddle connection contained in them.
The starting point of the saddle chain  plays a role, for example there could be a saddle chain ending
with a saddle connection, and a different one starting with the same saddle connection.  Fix a saddle connection $\gamma$ of length 
$j <n$ which 
appears in a saddle chain of length at most $n$.  
Let $E_{\pm}(\gamma)$ be the two extensions by one sided continuity of $\gamma$ 
obtaining by extending $\gamma$
backwards and forwards by $n-j$. The length of $E_{\pm}(\gamma)$ is $2n-j$.
Each saddle chain of length $n$ containing $\gamma$ must be contained in $E_{+}(\gamma)$ or in $E_-(\gamma)$.
We consider the longest saddle chain $\hat{\gamma}_\pm$ contained in each of $E_{\pm}(\gamma)$,
the worst case estimate is that $\hat{\gamma}_\pm$ is of length $2n-j < 2n$.
Thus there are at most $2( N^Q_c(n^{a_i}) - N^Q_c(n^{a_{i-1}})) <  2 N^Q_c(n^{a_i}) \le 2 \overline{C} n^{2a_i}$
such saddle chains.

Suppose $\gamma$ is of length $n^a$ with $a \in I_i$. In the next paragraph we 
explain the estimates for $E_+(\gamma)$, the estimates for $E_{-}(\gamma)$ are similar.

There are two cases, the first case is when
$\gamma$ is the shortest saddle connection in {\bf all}  saddle chain of length $n$ containing it, so the shortest saddle connection in $E_+(\gamma)$. 
It follows that the longest saddle chain in $E_+(\gamma)$ can consist of at most 
$j= \lfloor 2n/n^{a_{i-1}} \rfloor$ saddle connections.
Applying the argument from above
yields that $E_+(\gamma)$
can create at most $j^2 = (2n/n^{a_{i-1}})^2$
new saddle connections.

Now suppose that $\gamma$ is the shortest saddle connection in some saddle chain of length $n$, but not the shortest in some other saddle chain of length $n$. 
In this case, we redefine $E_+(\gamma)$ by extending $\gamma$ only up to the first shorter saddle connection. Let $m < 2n$ be the length of the redefined $E_+(\gamma)$. Repeating the above arguments yields the longest saddle chain in $E_+(\gamma)$ can consist of at most 
$j= \lfloor m/n^{a_{i-1}} \rfloor < \lfloor 2n/n^{a_{i-1}} \rfloor$ saddle connections and
 $E_+(\gamma)$
can create at most $j^2 = (m/n^{a_{i-1}})^2 < (2n/n^{a_{i-1}})^2$
new saddle connections.

Combining these estimates yields 
that  the number of possible new generalized diagonals emerging from a chain in
group $i$ is bounded from above by 
$\overline{C} n^{2a_i} (2n/n^{a_{i-1}})^2 = 4\overline{C} n^{2 + 2a_i - 2a_{{i-1}}} = 4\overline{C} n^{2 + 2/K}$.
Since we have $K$ groups the total number of newly
generated generalized diagonals emerging from chains of length $n$ is
bounded by  $4K \overline{C}  n^{2 + 2/K} =  4\lceil {2/\e} \rceil \overline{C}  n^{2 + 2/K} \le   (9 \overline{C}/\e)  n^{2 + \e}
$.
\end{proof}

\begin{proofof}{Proposition \ref{p:1}}
The lower bounds follow from Lemma \ref{l:1} by choosing $\underline{C}$ and $\underline{K} = \underline{C}/2$ the corresponding constants for the rational polygon $Q$.

On the other hand the upper bound follows from Lemma \ref{l:8} combining with the formaula 
$p(n) = \sum_{j=0}^{n-1} N_c(j)$ from
\cite{CaHuTr} to $\hat Q \in U$ yields
\begin{eqnarray*}
p^{\hat Q}(n+1)  
& \le & N_c(0) + \frac{8 \overline{C}}{\e}   \sum_{m=1}^{n} m^{2 + \e}
 \le   N_c(0) + \frac{8 \overline{C}}{\e} \int_1^{n+1} x^{2+\e} \, dx\\
& \le  & N_c(0) +  \frac{8 \overline{C}}{\e(3+\e)} (n+1)^{3+\e}.
\end{eqnarray*}
Thus we can choose a strictly positive constant $\overline{K}$
such that 
$p^{\hat Q}(n)  \le (\overline{K}/\e) n^{3+\e}$
for all $1 \le n \le n_0$.
\end{proofof}

\subsection{}
\begin{proofofof}{Theorem \ref{t5}}
Fix $\ell \ge 3$.
Consider a dense set $\{Q_i\}$ of rational polygons with $\ell$ sides. 
For short we will write $N_c^i(n)$ for $N_c^{Q_i}$ and $p^i(n)$
for $p^{Q_i}(n)$.
For each $Q_i$ we consider the constants $\underline{C_i},\underline{K_i},\overline{C_i},\overline{K_i}$ given by
Proposition \ref{p:1}, applied to  $\e_i := 1/i$ and $n_i$ so large that  
\begin{equation}\label{e2}  \max \left ( \left |\frac{\log(\underline{C_i})}{\log n_i} \right |,\left |\frac{\log(\underline{K_i})}{\log n_i} \right |, \left | \frac{\log(i\overline{C_i})}{\log n_i} \right |, \left | \frac{\log(i\overline{K_i})}{\log n_i} \right |
\right )
< \frac{1}{i}.
\end{equation}
According to Proposition \ref{p:1} we can choose an open set
 $U_i$  for $Q_i,\e_i,n_i$, such that 
for all $Q \in U_i$ we have 
\begin{equation*}\label{est}
\begin{split}
\frac{\log(N_c^Q(n_i))}{\log n_i}  & 
\ge \frac{\log(\underline{C_i}n_i^2)}{\log n_i} = 2 + \frac{\log(\underline{C_i})}{\log n_i}
> 2 - \frac{1}{i} \\
\frac{\log(p^Q(n_i))}{\log n_i} & 
\ge \frac{\log(\underline{K_i}n_i^3)}{\log n_i} = 3 + \frac{\log(\underline{K_i})}{\log n_i}
> 3 - \frac{1}
{i} 
\end{split}
\end{equation*}
and
\begin{equation*}
\begin{split}
\frac{\log(N_c^Q(n_i))}{\log(n_i)} & \le \frac{\log(i\overline{C_i} n_i^2)}{\log(n_i)}  = {2} + \frac{\log(i\overline{C_i} )}{\log(n_i)} 
< {2} + \frac{1}{i}.\\
\frac{\log(p^Q(n_i))}{\log(n_i)} & \le \frac{\log(i\overline{K_i} n_i^3)}{\log(n_i)}  = 3 + \frac{\log(i\overline{K_i} )}{\log(n_i)} 
< 3 + \frac{1}{i}.
\end{split}
\end{equation*}

Now consider the dense $G_\delta$-set
$$G := \bigcap_{k \ge 2} \bigcup_{i \ge k} U_i.
$$
For each $Q \in G$ there is an increasing sequence $i_k$ such that 
\begin{equation*}
\begin{split}
2 - \frac{1}{i_k} & < \frac{\log(N_c^Q(n_{i_k}))}{\log(n_{i_k})} 
< {2} + \frac{1}{i_k}\\
3 - \frac{1}{i_k} & < \frac{\log(p^Q(n_{i_k}))}{\log(n_{i_k})} 
< 3 + \frac{1}{i_k}
\end{split}
\end{equation*}
hold for each $i_k$. Setting $m_k := n_{i_k}$  yields
$\lim_{k \to \infty} \frac{\log N_c^Q(m_k)}{\log m_k} = 2$ and 
$\lim_{k \to \infty} \frac{\log p^Q(m_k)}{\log m_k} = 3$.

The assertion, 
$\lim_{k \to \infty} \frac{\log N_g^Q(m_k)}{\log m_k} = \lim_{k \to \infty} \frac{\log N_c^Q(m_k)}{\log m_k}$,  follows after a slight modification of the proof. For each polygon $Q$
there are positive constants $L_Q < d(Q) := \rm{diameter}(Q)$ and $N_0$ such that if $\gamma$ is a saddle connection of combinatorial length $n \ge N_0$  then its geometric length is 
contained in the interval $[L_Q n, d(Q) n]$.  Thus implies that $N_g(L_Q n) \le N_c(n) \le N_g(d(Q) n)$.
The stated result is scale invariant, so we can assume $d(Q)=1$, thus we
obtain $N_c(n) \le N_g(n) \le N_c(\lceil n/L_Q \rceil )$.
We immediately conclude 
$$\lim_{k \to \infty} \frac{\log N_g^Q(m_k)}{\log m_k} \ge \lim_{k \to \infty} \frac{\log N_c^Q(m_k)}{\log m_k} = 2.$$

To prove the reverse inequality we will modify the above construction so
that not only $\frac{\log N_c^Q(m_k)}{\log m_k} = 2$ but also $\frac{\log N_c^Q(\lceil m_k/L_Q \rceil)}{\log m_k} = 2$, which will finish the proof.

To prove the reverse inequality note that 
the constant $L_Q$ depends weakly on $Q$, it can be chosen 
constant in a neighborhood $\mathcal{N}$ of $Q$. In the rest of the proof we fix
the neighborhood and the constant $L := L_Q$.

Now in the previous construction in \eqref{e2} we additionally require that if $Q_i \in \mathcal{N}$ then $n_i$ satisfies
$$  \left |  \frac{\log(i\overline{C_i}/L^2)}{\log  n_i} \right |
< \frac{1}{i}$$
and applying  Proposition \ref{p:1} to  $Q_i,\e_i,\lceil n_i/L \rceil$.
Then
\begin{equation*}
\begin{split}
\frac{\log N_g^Q(m_k)}{\log m_k} & \le  \frac{\log N_c^Q(\lceil m_k/L \rceil)}{\log m_k} \le \frac{\log(i\overline{C_i} (n_i/L)^2)}{\log(n_i)}\\ & = {2} + \frac{\log(i\overline{C_i}/L^2 )}{\log(n_i)} 
\le {2} - \frac{1}{i}. \hspace{1.75in} \qed 
\end{split}
\end{equation*}
\end{proofofof}

To improve Theorem \ref{t5} from a subsequence result to a result for all $n$, one would need to have effective bounds on the various constants and the rate of approximation, this seems difficult.

\section{Metric complexity of polygonal billiards}
We begin by a geometric result, we will show that points whose orbit stays far from the boundary of the phase space must have a large $n$-cell. The boundary 
$\partial \mathcal{P}_Q$ of the phase space $\mathcal{P}_Q$ consists
of vectors tangent to a side of the polygon $Q$ and vectors whose
foot point is a vertex of $Q$.
If $Q$ is not convex then the flow orbit of a point $z \in P_Q$ can pass close to a vertex 
with neither $z$ nor $S(z)$ being close to a vertex (Figure \ref{fig3}), this motivates the 
following definition. 
For $z \in P_Q$ let $t(z)$ denote the geometric length of the orbit segment between $z$ and $S(z)$. 
Let
\begin{equation}\label{e:Ba}B_{a} := \{z  \in P_Q: \phi_t(z) \in B(\partial \mathcal{P}_Q,a)
\text{ for some } t \in [0,t(z)]\}
\end{equation}
and  for $T > 0$ let 
$$G_a^T := \{z \in {P}_Q: \phi_t(z) \not \in B_{a} 
\text{ for } 0 \le t \le T\}.$$
Let $T_n := n \cdot \text{diam}(Q)$.

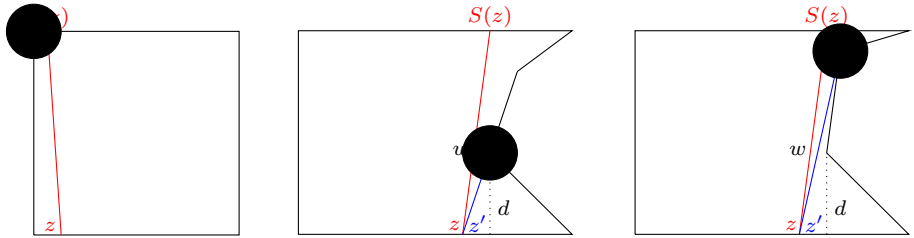
\begin{figure}[ht]
\begin{minipage}[ht]{.3\linewidth}
    \centering
    \begin{tikzpicture}[scale=1.8]
       \draw[] (1,0) -- (2.5,0) -- (2.5,1.5) -- (1,1.5) -- (1,0) ;
           \node at (1.12,0.062) {\red \tiny $z$};
            \node at (1.1,1.6) {\red \tiny $S(z)$};
        \draw[red] (1.2,0) -- (1.1,1.5);
         \draw[fill, opacity=0.1] (1,1.5) circle (0.2);
    \end{tikzpicture}
    \end{minipage}\nolinebreak
\begin{minipage}[ht]{.35\linewidth}
    \centering
    \begin{tikzpicture}[scale=1.8]
       \draw[] (1,0) -- (3,0) -- (2.4,0.6) -- (2.6,1.2) -- (3,1.5) -- (1,1.5) -- (1,0) ;
       \draw[dotted] (2.4,0) -- (2.4,0.6);
        \node at (2.5,0.2) {\tiny $d$};
           \node at (2.14,0.062) {\red \tiny $z$};
           \node at (2.31,0.08) {\color{blue} \tiny $z'$};
            \node at (2.4,1.6) {\red \tiny $S(z)$};
            \node at (2.5,0.6) {\tiny $v$};
               \node at (2.19,0.62) {\tiny $w$};
            \draw[green] (2.4,0.6) -- (2.278,0.614);
        \draw[red] (2.2,0) -- (2.4,1.5);
        \draw[blue] (2.2,0) -- (2.4,0.6);
         \draw[fill, opacity=0.1] (2.4,0.6) circle (0.2);
    \end{tikzpicture}
    \end{minipage}\nolinebreak
    \begin{minipage}[ht]{.35\linewidth}
    \centering
    \begin{tikzpicture}[scale=1.8]
       \draw[] (1,0) -- (3,0) -- (2.4,0.6) -- (2.5,1.35) -- (3,1.5) -- (1,1.5) -- (1,0) ;
       \draw[dotted] (2.4,0) -- (2.4,0.6);
        \node at (2.5,0.2) {\tiny $d$};
           \node at (2.14,0.062) {\red \tiny $z$};
           \node at (2.31,0.08) {\color{blue} \tiny $z'$};
            \node at (2.4,1.6) {\red \tiny $S(z)$};
            \node at (2.55,1.25) {\tiny $v$};
               \node at (2.19,0.62) {\tiny $w$};
            \draw[green] (2.5,1.35) -- (2.38,1.36);
        \draw[red] (2.2,0) -- (2.4,1.5);
        \draw[blue] (2.2,0) -- (2.5,1.35);
         \draw[fill, opacity=0.1] (2.5,1.35) circle (0.2);
    \end{tikzpicture}
    \end{minipage}
    \caption{Three examples of $z \in B_a$.}
    \label{fig3}
\end{figure}

\begin{proposition}\label{p:9} For each polygon $Q$, $a>0$, $n \ge 1$, and  $z \in G_a^{T_n}$ we have
 \begin{equation*}B(z,a/(2 + T_n)) 
\subset C_{n}(z)
\end{equation*}
\end{proposition}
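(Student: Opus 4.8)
The plan is to show that for $z \in G_a^{T_n}$, the whole ball $B(z,a/(2+T_n))$ follows $z$'s code for $n$ steps, by controlling how fast nearby orbits under the billiard \emph{flow} spread apart between consecutive bounces and how reflections from a side affect the separation. Fix $z = (s,\theta) \in G_a^{T_n}$ and $w \in B(z,a/(2+T_n))$, so that $d(z,w) = |s-s'| + |\theta-\theta'| < a/(2+T_n)$. Lift $z,w$ to phase points of the flow $\hat z, \hat w \in \mathcal{P}_Q$; since the two metrics agree on a single side, $\rho(\hat z,\hat w) = \|x-x'\|_2 + |\psi-\psi'| \le d(z,w)$. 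The key geometric fact is that between two reflections the billiard flow is just straight-line motion, which is an isometry in the position coordinate up to a factor linear in the elapsed time: if $\hat z_t = \phi_t(\hat z)$ and $\hat w_t$ is the free (unreflected) motion of $\hat w$, then $\|x(\hat z_t) - x(\hat w_t)\|_2 \le \|x(\hat z_0) - x(\hat w_0)\|_2 + t\,|\psi(\hat z_0) - \psi(\hat w_0)|$ and the angular coordinate is unchanged. Reflection off a fixed side is an isometry of $\mathcal{P}_Q$ for $\rho$ (it reflects positions across the line and negates the normal component of the direction, preserving both $\|\cdot\|_2$ and $|\Delta\psi|$). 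So as long as $\hat z$ and $\hat w$ keep hitting the \emph{same} sides, after total flow time $t$ we get $\rho(\phi_t(\hat z), \phi_t(\hat w)) \le (1+t)\,\rho(\hat z,\hat w)$.

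Now I run an induction on the number of bounces $k = 0,1,\dots,n$. The inductive hypothesis is: $w$ hits the same first $k$ sides as $z$, and the flow orbit of $\hat w$ up to the $k$-th bounce stays within distance $(1+T_n)\rho(\hat z,\hat w) < a$ of the corresponding piece of the flow orbit of $\hat z$ (here using $T_n = n\cdot\mathrm{diam}(Q)$ as an upper bound for the geometric length of any $n$-bounce orbit segment inside $Q$). Suppose this holds through bounce $k < n$. Between bounce $k$ and bounce $k+1$, the orbit of $\hat z$ travels along a segment of length $t(\phi\text{-time}) \le \mathrm{diam}(Q)$, and by the definition of $G_a^{T_n}$ and of $B_a$, every point $\phi_t(z)$ on this segment stays outside $B_a$, hence \emph{every point of that flow segment is at distance $\ge a$ from $\partial\mathcal{P}_Q$} — in particular at distance $\ge a$ from every vertex of $Q$ and the segment does not graze any side. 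Since the orbit of $\hat w$ is within $a$ of this segment (strict inequality, as $\rho(\hat z,\hat w) < a/(2+T_n)$ gives $(1+T_n)\rho < a$), it cannot reach a vertex or a side other than the one $\hat z$ hits; therefore $\hat w$'s next reflection is off the same side. This extends the coincidence of codes to $k+1$ steps, and by the isometry-plus-spreading estimate above the distance bound $(1+T_n)\rho(\hat z,\hat w)$ is maintained. After $n$ steps we conclude $w \in C_n(z)$, and since $w$ was an arbitrary point of the ball, $B(z,a/(2+T_n)) \subset C_n(z)$.

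The main obstacle — and the point that needs the most care — is the bookkeeping around the boundary of the phase space for \emph{non-convex} $Q$: an orbit segment between two bounces of $\hat z$ may pass close to a reflex vertex without either endpoint being near that vertex (precisely the situation in Figure \ref{fig3}), so a naive "both endpoints far from vertices" condition is not enough, which is exactly why $B_a$ is defined via the whole flow segment $\phi_t(z),\ t\in[0,t(z)]$, rather than just via $z$ and $S(z)$. I need to check that $z \in G_a^{T_n}$ genuinely forces every point of the flow orbit up to time $T_n$ to stay $a$-away from $\partial\mathcal{P}_Q$ (this is essentially the definition, since $z\notin B_a$ for all the relevant iterates, and $T_n$ bounds the total geometric length of $n$ bounces), and that an $a$-close free orbit then cannot "see" any obstacle that $\hat z$ does not see within that time — the factor $2+T_n$ (rather than $1+T_n$) in the radius gives the strict slack that makes this work and also absorbs the initial position/angle discrepancy cleanly. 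The remaining verifications — that straight-line flow is $1$-Lipschitz in position plus a linear-in-time angular term, and that a single reflection is a $\rho$-isometry — are elementary and I would state them as short lemmas or inline observations without grinding through coordinates.
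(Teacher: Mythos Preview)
Your proposal is correct and follows essentially the same geometric idea as the paper's proof: nearby orbits spread at most linearly in the unfolded picture, so if the flow orbit of $z$ stays $a$-far from $\partial\mathcal{P}_Q$ up to time $T_n$ then every $w$ with $d(z,w)<a/(2+T_n)$ must follow the same itinerary for $n$ bounces. The paper packages this slightly differently --- it introduces a flow-cell $\mathcal{C}_T(z)$, decomposes $B(z,\delta)$ into \emph{horizontal} (parallel) and \emph{vertical} (same-footpoint) segments to obtain the bound $\rho(\phi_T(z_1),\phi_T(z_2))\le(2+T)\delta$ in one shot, rather than your induction-on-bounces with the combined Lipschitz estimate $(1+t)\rho$ --- but the substance is the same.
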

\begin{proof}
There is a strictly positive constant $k_2$ 
depending only on $Q$ such that any orbit segment $\phi_{[0,T]}(z)$ makes at most $k_2 T$ collisions, thus
\begin{equation}\label{e3}
P_Q \setminus G_a^T = \{z \in P_Q: \phi_t(z) \in B_{a} \text{ for some } 0 \le t \le T\} \subset 
\hspace{-0.3cm}
\bigcup_{0 \le j \le k_2 T} S^{-j} B_{a}.
\end{equation}

Suppose $z \in G_a^T$. For calculational 
convenience we consider
 a flow-based version of $n$-cells, let $\mathcal{C}_T(z) \subset P_Q$ be the maximal 
neighborhood of $z$ (in $P_Q$)  such that $\phi_T|_{\mathcal{C}_T}$ is continuous.
A billiard orbit segment of geometric length $\text{diam}(Q)$ must make at least one bounce. 
It follows that  a billiard orbit segment of geometric length $T$ must make at least $T/\text{diam}(Q)$
bounces, and thus 
$$\mathcal{C}_T(z) \subset C_{\lfloor T/\text{diam}(Q) \rfloor}(z).$$
Note that
$C_{\lfloor k_2 T \rfloor }(z) \subset \mathcal{C}_T(z)$, but we will not use this.

Our goal is to estimate the largest $\delta > 0$ such that
$B(z,\delta) \subset \mathcal{C}_T(z)$;  $\delta$ is always assumed small enough so that  this inclusion holds.

Call a subset of parallel vectors in $B(z,\delta)$ a 
{\em  horizontal segment}. By assumption $\phi_T$ is continuous on $B(z,\delta)$, and thus when restricted
to a horizontal segment it is an isometry.

We call a  subset of vectors in $B(z,\delta)$ with the same base point in $B(z,\delta)$
a {\em vertical segment}.
Now we will consider the action of $\phi_T$ on vertical segments. Let $\ell \subset B(z,\delta)$ be  the vertical segment with lower endpoint $(s,\theta_0)$ 
and upper endpoint $(s,\theta_1)$
where $2\delta = \theta_1 - \theta_0$.
For  $r \in [0, 1]$ let 
$\theta_r := (\theta_1 - \theta_{0}) r+ \theta_{0}$, 
so $\ell = \{z_r := (s,\theta_r) : 0 \le r \le 1\}$. 
In the unfolding of the orbit segment $z$ to $\phi_T(z)$, the image $\phi_T (\ell)=:  
\{z_{r,T} := (\bar{x}_{r,T},{\theta_r}): 0 \le r \le 1\}$ is an arc of angle $\delta r$ of a circle of radius $T$ (Figure \ref{fig4}), thus in the unfolding we trivially have the following bound 
$||\bar{x}_{0,T} -\bar{x}_{r,T}|| <T \delta r$ for each $0 \le r \le 1$. On the other hand $\theta_r - \theta_0 = \delta r$.    
Thus in the phase space $\mathcal{P}_Q$
we have $\rho(z_{0,T}, z_{r,T}) \le T \delta r + \delta r = (T+1)\delta r$ for each $0 \le r \le 1$.

\begin{figure}
    \centering
    \begin{tikzpicture}
       \draw[] (0,0) -- (2,0);
 \draw[] (1,0) -- (1.5,4);
 \draw[] (1,0) --(2.5,3.87);
\draw (1.5,4) arc (90:75:3.91) ;
 \node at (0.8,0.15) {\tiny $z_0$};
  \node at (1.25,0.13) {\tiny $z_1$};
  \node at (1.25,4) {\tiny  $z_{0,T}$};
   \node at (2.8,3.8) {\tiny  $z_{1,T}$};
    \end{tikzpicture}
    \caption{Unfolding a vertical segment}
    \label{fig4}
\end{figure}
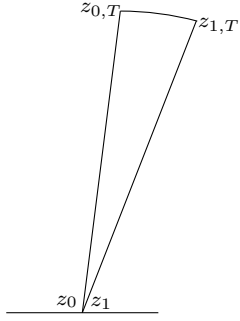

Now we will combine the above horizontal and vertical estimates.
For $z_1 = (s_1,\theta_1), z_2 = (s_2,\theta_2) \in B(z,\delta)$
we consider $z_3 = (s_1,\theta_2)$.
The estimates of the previous two paragraphs and
the triangle inequality yield
\begin{eqnarray*}
\rho(\phi_T(z_1),\phi_T(z_2)) & \le & \rho(\phi_T(z_1),\phi_T(z_3)) + \rho(\phi_T(z_3),\phi_T(z_2))\\
& \le & \delta + (T+1)\delta = (2 + T)\delta.
\end{eqnarray*}

We have arrived at the punch line of the geometric estimate.
If $z \in G_a^T$, we have $\rho(\phi_t(z),\partial P) \ge a$
for all $0 \le t \le T$.
Thus if $\delta$ satisfies
$\delta (2 + T)  < a$ or equivalently
$\delta < a/(2 + T)$ then all points in $B(z,\delta)$
belong to $\mathcal{C}_T(z)$ and thus we have $B(z,a/(2 + T)) 
\subset C_{\lfloor T/\text{diam}(Q) \rfloor }(z)$.
Applying this to $T_n := n \cdot \text{diam}(Q)$ yields the desired result.
\end{proof}

\begin{proofof}{Theorems \ref{t1} and \ref{t2}}
The first step of the proof is to estimate the quantity $\mu(B_a)$.
If $Q$ is convex we have 
\begin{equation}\label{e4}
B_{a} \subset B(\partial P_Q,a) \cup S^{-1} B(\partial P_Q,a)
\end{equation}
and thus there is a constant $k_1 >0$ such that
\begin{equation}\label{e4.1}
    \mu(B_{a}) \le k_1 a.
\end{equation}
Such an estimate remains true in the non-convex case, although \eqref{e4} does not hold.

To see that \eqref{e4.1} holds  we need to consider the measure of points in
$z=(s,\theta) \in B_a\setminus (B(\partial P_Q,a) \cup S^{-1} B(\partial P_Q,a))$; such a point is depicted in red in the central and right parts of  Figure \ref{fig3}.
Let $\mathcal{S}$ consist of the points whose $S$-image is not defined, i.e., hits a vertex, for example
the blue segment in Figure \ref{fig3} starts in $\mathcal{S}$.
Let
$d$ be the smallest distance of a non-convex vertex\footnote{i.e., the segment connecting
two close enough points on the boundary of $Q$ on opposite sides of the vertex lies outside $Q$.}  to a side not containing this vertex (dotted segment in the figure).
Among the points closest to the segment $\phi_{[0,t(z)]}(z)$ there must be a vertex.  There could
 be several of them, then we choose the last one to which the flow arrives, call
it $v$. Then there is a direction
$\theta'$ so that $z' = (s,\theta') \in \mathcal{S}$, this is denoted in blue in the figure. It has length at least $d$.
Consider the (green) segment starting at $v$ and perpendicular to the segment  $\phi_{[0,t(z)]}(z)$, call its other end point $w$. By assumption the segment $[v,w]$ it has length less that $a$.
This right triangle with vertices $z,v,w$ yields 
$|\theta - \theta'| < \arcsin \frac{a}{d} < \frac{a}{d}$, and thus 
$z \in B(\mathcal{S}, \frac{a}{d})$. The $d$ in this estimate could be
improved in certain cases, for example in the right part of Figure \ref{fig3}, but this is not necessary for us.
We have shown
\begin{equation}\label{e5}
B_{a} \subset B(\partial P_Q,a) \cup B(\mathcal{S}, a/d) \cup S^{-1} B(\partial P_Q,a).
\end{equation}
Since  the set $\mathcal{S}$ is a smooth curve in the phase space, we again obtain a (different) constant $k_1 >0$ such that $\mu(B_{a}) \le k_1 a$. 

This inequality combined with \eqref{e3} and the fact that  $\mu$ is $S$-invariant implies
we can choose a $k_3$ such that 
\begin{equation}\label{e6}\mu(P_Q \setminus G_a^T)  \le (k_2 T + 1) \mu(B_a) = (k_2 T + 1) k_1 a
\le k_3 T a.\end{equation}

Now we have all the estimates necessary to make the  $G_\delta$ argument. Let
$K := 2 +\text{diam}(Q)$. 
For Theorem \ref{t1} let $a_n := \frac{K}{n^2 f(n)}$ while for Theorem \ref{t2} let $a_n := \frac{K}{n  f(n)}$ and consider the corresponding sets $B_{a_n}$.
Applying Proposition \ref{p:9} to $z \in G_{a_n}^{T_n}$ yields $B(z,1/(n^3 f(n))) 
\subset C_{n}(z)$, resp.\ $B(z,1/(n^2 f(n))) \subset C_{n}(z)$.

For both choices of $(a_n)$ Equation \eqref{e6}  yields
$\mu(G_{a_n}^{T_n})  \to 1$ as $n \to \infty$.
Let 
$$G := \bigcap_{n_0 \ge 1} \bigcup_{n \ge n_0} G_{a_n}^{T_n}.$$
Notice that $G$ is a dense $G_\delta$-set of full measure.
If $z \in G$ then there is an infinite sequence $n_i$
such that $z \in G_{a_{n_i},T_{n_i}}$ for all $i$ which finishes the proof of Theorem \ref{t2}.
To prove Theorem \ref {t1} we notice that $\mu(G_{a_n}^{T_n})$ is summable
and we apply the Borel-Cantelli lemma to the sets $G_{a_n}^{T_n}$ to conclude that for a.e. $z\in G$ there is an $n_0$ such that for all $n\ge n_0$ we have $z \in G_{a_n}^{T_n}$.
\end{proofof}

\begin{proofof}{Corollary \ref{c0}}
Any notation in $\Sigma_Q$ which corresponds to a notation in $Q$ will be denoted with a bar,  for example $\bar \mu$ denote the lift  of the invariant measure $\mu$ to $\Sigma$ and $\bar z \in \Sigma_Q$ denote the code of $z \in P_Q$.  For a point $z$ in the set $G$ from Theorem \ref{t1}, the Theorem  yields an $n_0$ such that $\bar \mu(\bar z_{[0,n-1]}) = \mu(C_n(z)) \ge \frac{const}{n^6(f(n))^2} $ for all $n \ge n_0$. Let $\bar z[0,n-1]$ denote the cylinder set
corresponding to the word $\bar z_{0,n-1}$, here $const$ is the normalizing constant of the measure $\mu$.
Let $G_N := \{z\in G : n_0(z) \le N\} $. Since $G = \cup_{N \ge 1} G_N$ is of full measure for each $\e > 0$ we can
choose $N$ such that $\mu(G_N) \ge 1 - \e$, thus its lift 
satisfies $\bar\mu(\bar G_N) > 1 - \e$.
Since distinct cylinder sets are disjoint we obtain a cover of $\bar{G}_N$ by cylinders of measure at least $\frac{const}{n^6f(n)^2} $, so there are at most $\frac{(1-\e) n^6 f(n)^2}{const}$ such cylinders.
For each $z$ the cylinder set $\bar z_{[0,n-1]}$ is just the ball $\mathcal{B}(\bar z,n,0)$.  
Any cover by cylinders ($0$ balls) is a cover by $\e$ balls, thus the metric complexity can only be smaller than our estimate. 
\end{proofof}

\section{Generalizations of metric complexity results}\label{s:6}
\subsection{Other invariant measures}
Both  Theorems \ref{t1} and \ref{t2} have versions for an arbitrary invariant measure on $P_Q$.
We remind the reader that 
the set $B_a$ was defined in \eqref{e:Ba}.

\begin{theorem}\label{t10} Let $Q$ be a polygon, $\nu$
an $S$-invariant measure,  and $f: \mathbb{N} \to \mathbb{N}$ be a monotonically  increasing  function 
such that $\frac{1}{n f(n)}$ is summable.
There exists a non-increasing sequence $(a_n)$ satisfying $\nu(B_{a_n})< 1 /n^2 f(n)$. For any such sequence there
exists a set $G$ of full $\nu$-measure which is a dense $G_\delta$-subset of $\text{supp}(\nu)$ with the property that 
for  every $z \in G$  there exists $n_0 \ge 0$ such that
for all $n \ge n_0$ we have 
$$B\left (z,\frac{a_n}{2 + n \cdot \text{diam}(Q)} \right ) \subset C_n(z).$$
\end{theorem}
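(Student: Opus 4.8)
The plan is to mimic the proof of Theorems \ref{t1} and \ref{t2}, replacing the explicit estimate $\mu(B_a)\le k_1 a$ (which used the Liouville structure) by an abstract consequence of the $\nu$-measurability of the nested sets $B_a$. First I would observe that the family $\{B_a\}_{a>0}$ is increasing in $a$ and that $\bigcap_{a>0} B_a$ is contained in the $\nu$-null set of vectors whose forward orbit segment is tangent to a side or hits a vertex — strictly speaking one checks $\nu(\bigcap_a B_a)=0$ either because that intersection lies in $\partial\mathcal P_Q\cup S^{-1}(\text{singular set})$, a finite union of lower-dimensional sets which any Borel invariant measure on $P_Q$ gives zero mass (the billiard map is defined $\nu$-a.e.), or, if one wants to be safe, by simply passing to the invariant measure restricted to the complement. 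Consequently $\nu(B_a)\downarrow 0$ as $a\downarrow 0$, so for each $n$ we may pick $a_n$ with $\nu(B_{a_n})<1/(n^2 f(n))$; choosing them successively and taking minima we may assume $(a_n)$ is non-increasing. This establishes the existence claim in the statement.

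Next I would reproduce the Borel–Cantelli argument. Using \eqref{e3} with $T=T_n := n\cdot\mathrm{diam}(Q)$ and the $S$-invariance of $\nu$,
\begin{equation*}
\nu\big(P_Q\setminus G_{a_n}^{T_n}\big)\le (k_2 T_n+1)\,\nu(B_{a_n})\le (k_2\,\mathrm{diam}(Q)\,n+1)\cdot\frac{1}{n^2 f(n)}\le \frac{k_3}{n f(n)},
\end{equation*}
which is summable by hypothesis on $f$. Hence $\sum_n \nu(P_Q\setminus G_{a_n}^{T_n})<\infty$, and Borel–Cantelli gives a full-$\nu$-measure set of $z$ lying in $G_{a_n}^{T_n}$ for all large $n$; intersecting with the dense $G_\delta$-set $G:=\bigcap_{n_0\ge1}\bigcup_{n\ge n_0}G_{a_n}^{T_n}$ (which is dense in $\mathrm{supp}(\nu)$ because each $\bigcup_{n\ge n_0}G_{a_n}^{T_n}$ is open and of full $\nu$-measure, hence meets every nonempty open subset of $\mathrm{supp}(\nu)$) we obtain the desired $G$. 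Finally, for $z\in G$ and $n\ge n_0(z)$ we have $z\in G_{a_n}^{T_n}$, so Proposition \ref{p:9} applied with $a=a_n$ gives exactly
\begin{equation*}
B\Big(z,\frac{a_n}{2+T_n}\Big)=B\Big(z,\frac{a_n}{2+n\cdot\mathrm{diam}(Q)}\Big)\subset C_n(z).
\end{equation*}

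The only genuinely new point — and the one I would be most careful about — is the claim that $\nu(B_a)\to 0$; this is where generality in $\nu$ could bite, since $B_a$ is a two-dimensional set that shrinks not to the empty set but to the (null, lower-dimensional) singular locus, and one must know that $\nu$ charges no lower-dimensional piece of that locus. For an arbitrary $S$-invariant Borel probability measure this is automatic once one grants that $S$ (hence all iterates, and the relevant tangency/vertex-hitting conditions) are defined $\nu$-almost everywhere, which is part of what it means for $\nu$ to be an invariant measure for the billiard map; alternatively one restricts attention to the invariant set where all iterates are defined without changing $\mathrm{supp}(\nu)$ up to a null set. Everything else is a verbatim transcription of the proof of Theorems \ref{t1} and \ref{t2}, with the abstract bound $\nu(B_{a_n})<1/(n^2 f(n))$ playing the role that $\mu(B_a)\le k_1 a$ played there, and with the density of $G$ in $\mathrm{supp}(\nu)$ — rather than in all of $P_Q$ — being the natural substitute for the full-space density statement.
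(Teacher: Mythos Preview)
Your proposal is correct and follows essentially the same approach as the paper: both show $\nu(B_a)\to 0$ to produce the sequence $(a_n)$, then feed the resulting bound $\nu(P_Q\setminus G_{a_n}^{T_n})\lesssim 1/(nf(n))$ into Borel--Cantelli and finish with Proposition~\ref{p:9}. The only cosmetic difference is that the paper obtains $\nu(B_a)\to 0$ via the explicit inclusion \eqref{e5}, i.e.\ $\nu(B_a)\le 2\nu(B(\partial P_Q,a))+\nu(B(\mathcal S,a/d))\to 0$ using $\nu(\partial P_Q)=\nu(\mathcal S)=0$, whereas you argue directly that $\bigcap_{a>0}B_a$ lies in the singular locus; both routes are fine.
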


For the version of Theorem \ref{t2} we get:

\begin{theorem}\label{t11} Let $Q$ be a polygon, $\nu$
an $S$-invariant measure, and $f: \mathbb{N} \to \mathbb{N}$ be a monotonically increasing unbounded function. 
There is  a  non-increasing sequence $(a_n)$  
which satisfies $\nu(B_{a_n}) < K_0 / n f(n)$.  For any such sequence
there is  a set $G$ of full $\nu$-measure which is a dense $G_\delta$-subset of $\text{supp}(\nu)$ with the property that
for  every $z \in G$ there exists an infinite strictly increasing sequence $n_i$  such that 
$$B\left (z,\frac{a_{n_i}}{2 + {n_i} \cdot \text{diam}(Q)} \right ) \subset C_{n_i}(z).$$
\end{theorem}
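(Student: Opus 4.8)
The plan is to run, almost verbatim, the $G_\delta$ argument used above to prove Theorems~\ref{t1} and \ref{t2}, with the Liouville measure replaced by $\nu$. Inspecting that argument, the only step that does not carry over directly is the estimate $\mu(B_a)\le k_1 a$ of \eqref{e4.1}, which has no analogue for a general invariant measure; in its place I would use only the qualitative fact that $\nu(B_a)\to 0$ as $a\to 0^+$. Proving this is, in effect, the same as proving the first assertion of the theorem, that a suitable sequence $(a_n)$ exists.

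Thus I would begin by showing $\nu(B_a)\downarrow 0$ as $a\downarrow 0$. The sets $B_a$ of \eqref{e:Ba} decrease with $a$, and their intersection
$$B_0:=\bigcap_{a>0}B_a=\{z\in P_Q:\phi_t(z)\in\partial\mathcal{P}_Q\text{ for some }t\in[0,t(z)]\}$$
consists of phase points whose orbit segment from $z$ to $S(z)$ meets the singular part $\partial\mathcal{P}_Q$ of the flow phase space: an endpoint must be tangent to a side or based at a vertex, or the segment must pass through a vertex. All of these are degenerate configurations, so $B_0$ is disjoint from the full-measure invariant set of phase points whose bi-infinite billiard orbit is defined and avoids vertices and tangencies --- the set on which, by the standing conventions attached to an $S$-invariant measure, $\nu$ is carried. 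Hence $\nu(B_0)=0$, and since $\nu$ is finite, continuity from above along the decreasing family $(B_a)$ gives $\nu(B_a)\to\nu(B_0)=0$. It then suffices, for each $n$, to choose $a_n>0$ with $\nu(B_{a_n})<K_0/(nf(n))$ --- any fixed $K_0>0$ works, e.g.\ $K_0=1$ --- and then to replace $a_n$ by $\min(a_1,\dots,a_n)$ to make the sequence non-increasing without weakening the bound.

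Next, given any non-increasing sequence $(a_n)$ with $\nu(B_{a_n})<K_0/(nf(n))$, I would set $T_n:=n\cdot\text{diam}(Q)$ and reproduce the estimate \eqref{e6}: the inclusion \eqref{e3}, the $S$-invariance of $\nu$, and $\nu\big(\bigcup_{j\ge 0}S^{-j}\mathcal{S}\big)=0$ give
$$\nu\big(P_Q\setminus G_{a_n}^{T_n}\big)\le(k_2T_n+1)\,\nu(B_{a_n})\le k_3\,T_n\,\nu(B_{a_n})<\frac{k_3\,K_0\,\text{diam}(Q)}{f(n)},$$
and the right-hand side tends to $0$ because $f$ is increasing and unbounded; hence $\nu(G_{a_n}^{T_n})\to 1$. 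Set $G:=\text{supp}(\nu)\cap\bigcap_{n_0\ge 1}\bigcup_{n\ge n_0}G_{a_n}^{T_n}$. Each $G_{a_n}^{T_n}$ is open in $P_Q$ (exactly as in the proof of Theorems~\ref{t1} and \ref{t2}), so $G$ is a $G_\delta$-subset of $\text{supp}(\nu)$; and $\nu\big(P_Q\setminus\bigcup_{n\ge n_0}G_{a_n}^{T_n}\big)\le\inf_{m\ge n_0}\nu(P_Q\setminus G_{a_m}^{T_m})=0$ for every $n_0$, so $G$ is of full $\nu$-measure, hence dense in $\text{supp}(\nu)$. Finally, for $z\in G$ there is an infinite strictly increasing sequence $(n_i)$ with $z\in G_{a_{n_i}}^{T_{n_i}}$, and Proposition~\ref{p:9} applied with $a=a_{n_i}$ and $n=n_i$ yields $B\big(z,a_{n_i}/(2+T_{n_i})\big)\subset C_{n_i}(z)$, which is the asserted inclusion since $2+T_{n_i}=2+n_i\cdot\text{diam}(Q)$. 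The companion Theorem~\ref{t10} is proved the same way, the only difference being that the requirement $\nu(B_{a_n})<1/(n^2f(n))$ with $1/(nf(n))$ summable makes $\sum_n\nu(P_Q\setminus G_{a_n}^{T_n})$ converge, so one invokes the Borel--Cantelli lemma and obtains the conclusion for all large $n$ rather than merely along a subsequence.

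I expect the first step to be the only genuine obstacle. For the Liouville measure the decay $\mu(B_a)\le k_1 a$ is explicit, whereas for an arbitrary invariant $\nu$ one has only the qualitative assertion that the degenerate limit set $\bigcap_{a>0}B_a$ is $\nu$-null, and making this rigorous forces one to pin down what ``$S$-invariant measure'' means --- it must be carried by the full-measure set of genuine, vertex- and tangency-avoiding billiard orbits. A secondary point to verify carefully is that the sets $G_{a_n}^{T_n}$ are open, so that $G$ is honestly a $G_\delta$; this is the same check as in Theorems~\ref{t1} and \ref{t2}.
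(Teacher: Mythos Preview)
Your proposal is correct and follows essentially the same route as the paper's proof: establish $\nu(B_a)\to 0$ from the fact that $\nu$ is supported on non-singular points, choose $(a_n)$ accordingly, then repeat the $G_\delta$ argument of Theorems~\ref{t1}--\ref{t2} with Proposition~\ref{p:9} supplying the geometric inclusion. The only cosmetic difference is that the paper reaches $\nu(B_a)\to 0$ via the already-established inclusion~\eqref{e5} (bounding $\nu(B_a)$ by $2\nu(B(\partial P_Q,a))+\nu(B(\mathcal{S},a/d))$ and using $\nu(\partial P_Q)=\nu(\mathcal{S})=0$), whereas you argue directly by continuity from above on the decreasing family $(B_a)$ and identify $\bigcap_a B_a$ as a subset of the singular set; both arguments rest on the same null-set fact.
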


\begin{proofof}{Theorems \ref{t10} and \ref{t11}} The proof is a minor modification of 
the proof of Theorems \ref{t1} and \ref{t2}. 
We begin by showing  the existence of the sequences $(a_n)$.
Inclusion \eqref{e5} yields
\begin{equation}\label{eextra}\nu(B_a) \le 2 \nu(B(\partial Q,a)) + \nu(B(\mathcal{S},a)).
\end{equation}
By definition $\nu$ is supported on the set of 
non-singular points, in particular $\nu(\partial {P}_Q) = 0$ and $\nu(\mathcal{S}) = 0$.
Thus we can choose $(a_n)$ as in the statements of the two theorems since the right hand side of \eqref{eextra} goes to zero as $a \to 0$.  

The sequences $(a_n)$ are implicitly define, they differ from those in Theorems \ref{t1} and \ref{t2}, but by definition the estimates of measures of the sets are the same
as for $\mu$.
Thus for both Theorems we have  $\nu(P_Q \setminus G_n^{T_n}) \to 0$ and for Theorem \ref{t10}
we can apply the Borel-Cantelli Lemma.  The  rest of the proof follows from Proposition \ref{p:9} in an identical fashion, the only difference being that we do not
know the exact form of the sequence $(a_n)$.
\end{proofof}

For irrational polygons we know very little about invariant measures. We give some sufficient conditions for which we can understand the growth of $a_n/(2 +n \cdot \text{diam}(Q))$.

If there exists $a >0$ such that  $\nu(\{z\in P_Q: d(z,\partial {P}_Q) < a\}) = 0$
for sufficiently small $a$, then  Theorem \ref{t10}
tells us that for almost every $z$ there exists $n_0 \ge 0$ such that
for all $n \ge n_0$ we have 
$$B\left (z,\frac{a}{2 + n \cdot \text{diam}(Q)} \right ) \subset C_n(z).$$
This occurs for example for any invariant measure on a collection of periodic orbits which stay away from the boundary.

Suppose now that $\nu(B(\partial {P}_Q,a)) \le a^r$ for some $r > 0$ and all sufficiently small $a$. In this case, the conclusion of Theorem \ref{t10} reads for $\nu$-a.e.\ $z$ there exists $n_0$ such that for all $n \ge n_0$ we have
$$B\left (z,\frac{1}{\sqrt[r]{n^2 f(n)}(2 + n \cdot \text{diam}(Q))} \right ) \subset C_{n}(z)$$
while the conclusion of Theorem \ref{t11}
reads for $\nu$-a.e.\ $z$ there exists an infinite strictly increasing sequence $n_i$ such that 
$$B\left (z,\frac{1}{\sqrt[r]{n_i f(n_i)}(2n_i+1)} \right ) \subset C_{n_i}(z).$$
Of course the Liouville measure satisfies this with $r=1$.
Another example would be to take the uniform measure on a periodic cylinder.  
The boundary of a cylinder must touch a vertex somewhere, thus this also yields an example with $r=1$
in any polygon with a periodic orbit.

\subsection{Higher dimensions} Our techniques work for polyhedra in any dimension. 
In the $m$-dimensional case the phase space $P_Q$ of the billiard map is 
$2(m-1)$-dimensional ($m-1)$ spatial coordinates and $(m-1)$ angular coordinates.  
The boundary of the phase space is then $(2m-3)$-dimensional, coming from inner 
pointing vectors whose foot point is in the intersections of sides of the polyhedra, and the set of points tangent to a face.
If again $\mu$ denotes the Liouville measure we obtain
$\mu(B_{a_n}) \le C a_n^{2m-3}$ and 
$\mu(P_Q \setminus G_n) \le  
n C a_n^{2m-3}$.  The estimate in Proposition \ref{p:9} on linear separation does not
change, thus
for a version of Theorem \ref{t1} we choose 
$a_n = \frac{K}{\sqrt[2m-3]{n^2 f(n)}}$ while for a 
version of  Theorem \ref{t2} we use 
$a_n = \frac{K}{\sqrt[2m-3]{n f(n)}}$ which yields the following two results.

\begin{theorem}\label{t12} Let $Q$ be a polyhedron in $\R^m$  and $f: \mathbb{N} \to \mathbb{N}$ be a monotonically  increasing function 
such that $\frac{1}{n f(n)}$ is summable.
Then there is a dense $G_\delta$-set $G$ of full $\mu$-measure such that 
for  every $z \in G$  there exists $n_0 \ge 0$ such that
for all $n \ge n_0$ we have 
$$B\left (z,\frac{1}{n \sqrt[2m-3]{n^2 f(n)}} \right ) \subset C_n(z).$$
\end{theorem}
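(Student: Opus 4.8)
The plan is to follow the blueprint already established for Theorem \ref{t1}, substituting the $m$-dimensional measure estimates for the two-dimensional ones and then running the same Borel--Cantelli / $G_\delta$ argument. First I would record the higher-dimensional analog of the geometric separation result: the proof of Proposition \ref{p:9} never used the dimension of $P_Q$ in an essential way --- it only used that along a horizontal segment $\phi_T$ is an isometry and that along a vertical segment the image of an angular arc of size $\delta$ at distance $T$ has diameter at most $(T+1)\delta$. In $\R^m$ the angular coordinate lives in an $(m-1)$-dimensional sphere of directions, but the same circular-arc estimate applies coordinate-wise (or to the geodesic in the direction sphere), so one still obtains $B(z, a/(2+T)) \subset \mathcal{C}_T(z)$ for $z \in G_a^T$, hence $B(z, a/(2+T_n)) \subset C_n(z)$ with $T_n = n\cdot\mathrm{diam}(Q)$. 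I would state this as a one-line remark that Proposition \ref{p:9} holds verbatim for polyhedra.

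Next I would verify the measure estimate $\mu(B_{a}) \le C a^{2m-3}$ claimed in the text. The boundary $\partial \mathcal{P}_Q$ is $(2m-3)$-dimensional inside the $(2m-2)$-dimensional phase space, so an $a$-neighborhood of it (in either the flow or map phase space) has $\mu$-measure $O(a)$ --- wait, that gives only $O(a)$, not $O(a^{2m-3})$; the stronger exponent must instead come from the fact that the relevant bad set $B_a$ is contained in a neighborhood of the \emph{codimension-$(2m-3)$} stratum where orbits graze a lower-dimensional face (the intersection of two or more faces), together with the singular set $\mathcal{S}$ of the billiard map, which has codimension $2m-3$ as well in the polyhedral setting. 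So the key structural input is the inclusion analogous to \eqref{e5}: $B_a \subset B(\partial\mathcal{P}_Q, a)$-type pieces whose dominant contribution is the tube of radius $\sim a$ around a set of codimension $2m-3$, giving $\mu(B_a) \le C a^{2m-3}$. This, combined with the collision-count bound \eqref{e3} and $S$-invariance of $\mu$, yields $\mu(P_Q \setminus G_{a}^{T_n}) \le (k_2 T_n + 1) C a^{2m-3} \le C' n a^{2m-3}$, exactly as asserted in the excerpt.

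With these two ingredients the rest is a transcription of the proof of Theorem \ref{t1}. Set $a_n := K / \sqrt[2m-3]{n^2 f(n)}$ with $K = 2 + \mathrm{diam}(Q)$; then $\mu(P_Q \setminus G_{a_n}^{T_n}) \le C' n \cdot K^{2m-3}/(n^2 f(n)) = C''/(n f(n))$, which is summable by hypothesis. Define $G := \bigcap_{n_0 \ge 1} \bigcup_{n \ge n_0} G_{a_n}^{T_n}$; this is a dense $G_\delta$, and by Borel--Cantelli it has full $\mu$-measure, with each $z \in G$ lying in $G_{a_n}^{T_n}$ for all $n$ beyond some $n_0(z)$. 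For such $n$, the higher-dimensional Proposition \ref{p:9} gives
$$B\!\left(z, \frac{a_n}{2 + T_n}\right) = B\!\left(z, \frac{K}{\sqrt[2m-3]{n^2 f(n)}\,(2 + n\,\mathrm{diam}(Q))}\right) \subset C_n(z),$$
and since $2 + n\,\mathrm{diam}(Q) \le K n$ one has $\frac{K}{\sqrt[2m-3]{n^2 f(n)}\,(2+n\,\mathrm{diam}(Q))} \ge \frac{1}{n\sqrt[2m-3]{n^2 f(n)}}$, which gives the stated inclusion $B(z, 1/(n\sqrt[2m-3]{n^2 f(n)})) \subset C_n(z)$.

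The main obstacle is the measure estimate $\mu(B_a) = O(a^{2m-3})$: one must carefully identify, for a non-convex polyhedron, exactly which strata of lower-dimensional faces an orbit segment of length $t(z)$ can graze without $z$ or $S(z)$ being near $\partial\mathcal{P}_Q$, and check that the union of their $a$-tubes (plus the tube around the singular curve $\mathcal{S}$) really has the codimension $2m-3$ needed --- this is the genuine higher-dimensional content and the place where the two-dimensional arcsine argument around a grazed vertex has to be generalized to grazing a face of dimension $\le m-2$. Everything downstream of that estimate is routine.
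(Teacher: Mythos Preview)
Your outline is essentially the paper's own argument: it too simply asserts that Proposition \ref{p:9} carries over unchanged, states the measure bound $\mu(B_a)\le C a^{2m-3}$ together with $\mu(P_Q\setminus G_{a_n}^{T_n})\le C' n\,a_n^{2m-3}$, chooses $a_n = K/\sqrt[2m-3]{n^2 f(n)}$, and then repeats the Borel--Cantelli/$G_\delta$ argument from Theorem \ref{t1} verbatim.

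One remark on the point where you hesitate. The paper's justification for the exponent $2m-3$ is not your ``codimension $2m-3$ stratum'' picture; the paper instead records that $\partial\mathcal{P}_Q$ is $(2m-3)$-dimensional (tangent vectors to a face, and vectors based at an intersection of faces) and from this asserts the bound without further detail. Your instinct that a codimension-one set naively gives only $O(a)$ is sound, and your attempt to rescue the exponent by declaring the singular set and grazing strata to have codimension $2m-3$ is not correct as stated (in the $(2m-2)$-dimensional map phase space that would mean a $1$-dimensional set). So you are right to flag the estimate $\mu(B_a)\le C a^{2m-3}$ as the place where genuine higher-dimensional work is required; the paper leaves it at the same level of sketch that you do, so your proposal is neither more nor less complete than the paper's proof on this point. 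Everything downstream in your write-up matches the paper exactly.
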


\begin{theorem}\label{t13} Let $Q$ be a polyhedron in $\R^m$ and $f: \mathbb{N} \to \mathbb{N}$ be a monotonically increasing function.  Then there is a dense $G_\delta$-set $G$ of full $\mu$-measure such that 
for  every $z \in G$ there exists an infinite strictly increasing sequence $n_i$  such that 
$$B\left (z,\frac{1}{n_i \sqrt[2m-3]{n_i f(n_i)}} \right ) \subset C_{n_i}(z).$$
\end{theorem}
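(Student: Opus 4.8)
The plan is to run the proof of Theorem \ref{t2} essentially word for word inside the $2(m-1)$-dimensional phase space $P_Q$ of the billiard map on a polyhedron $Q\subset\R^m$, the only substantive change being the Liouville volume of the ``bad'' set. First I would reproduce the definitions in the new setting: as in \eqref{e:Ba}, let $B_a\subset P_Q$ be the set of $z$ whose flow orbit between $z$ and $S(z)$ meets the $a$-neighborhood of $\partial\mathcal P_Q$, and put $G_a^{T}:=\{z\in P_Q:\phi_t(z)\notin B_a\text{ for }0\le t\le T\}$ with $T_n:=n\cdot\text{diam}(Q)$. In dimension $m$ the singular locus $\partial\mathcal P_Q$ is the union of the inward vectors whose foot point lies on the $(m-2)$-skeleton of $\partial Q$ and the vectors tangent to a face, a $(2m-3)$-dimensional set.

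The key new input is the higher-dimensional substitute for \eqref{e4.1}, namely $\mu(B_a)\le C a^{2m-3}$ for all small $a$. I would obtain this along the lines of \eqref{e5}: split $B_a$ into the part where $z$ or $S(z)$ already lies in $B(\partial P_Q,a)$ and the ``near-miss'' part, where the flow orbit grazes a codimension-$\ge2$ stratum of $\partial Q$ while $z$ and $S(z)$ stay away from $\partial P_Q$. The near-miss part is treated by the right-triangle argument around Figure \ref{fig3}, now dropping the perpendicular from the nearest point of the grazed stratum onto the orbit segment, which places such a $z$ in an $O(a)$-neighborhood of the lower-dimensional set $\mathcal S$ of phase points whose orbit actually hits that stratum. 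One then bounds the Liouville measure of each piece; the exponent $2m-3$ is exactly what the $(2m-3)$-dimensional loci produce (an $a$-tube around the $(m-2)$-skeleton crossed with directions, the grazing directions near a face weighted against the Liouville density $\langle v,n\rangle$ which vanishes at tangency, and an $a$-tube around $\mathcal S$). I expect this volume bound to be the one genuinely new step, and the main obstacle: it requires keeping track of the full stratification of $\partial Q$ in $\R^m$ and checking that grazing collisions do not spoil the power.

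Everything else is dimension-independent. Proposition \ref{p:9} goes through verbatim: unfolding the orbit of $z$ over time $T$, a ball $B(z,\delta)$ decomposes into parallel ``horizontal'' slices on which $\phi_T$ is an isometry and ``vertical'' slices that spread by at most $(T+1)\delta$, whence $\rho(\phi_T(z_1),\phi_T(z_2))\le(2+T)\delta$ and so $B(z,a/(2+T_n))\subset C_n(z)$ for every $z\in G_a^{T_n}$. Combining the collision-rate bound with \eqref{e3} and the $S$-invariance of $\mu$ gives $\mu(P_Q\setminus G_a^{T_n})\le(k_2T_n+1)\mu(B_a)\le k_3\,n\,a^{2m-3}$.

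Finally I would set $K:=2+\text{diam}(Q)$ and choose $a_n:=K/\sqrt[2m-3]{n f(n)}$, so that $\mu(P_Q\setminus G_{a_n}^{T_n})\le k_3K^{2m-3}/f(n)\to0$ and, by the elementary inequality $2+n\,\text{diam}(Q)\le nK$ valid for $n\ge1$, $a_n/(2+T_n)\ge 1/(n\sqrt[2m-3]{nf(n)})$; hence $B(z,1/(n\sqrt[2m-3]{nf(n)}))\subset C_n(z)$ for each $z\in G_{a_n}^{T_n}$. Putting $G:=\bigcap_{n_0\ge1}\bigcup_{n\ge n_0}G_{a_n}^{T_n}$, the same reasoning as in Theorem \ref{t2} shows $G$ is a dense $G_\delta$-set with $\mu(G)=1$ (because $\mu(G_{a_n}^{T_n})\to1$), and every $z\in G$ lies in $G_{a_{n_i}}^{T_{n_i}}$ for an infinite strictly increasing sequence $(n_i)$, which via Proposition \ref{p:9} yields the asserted inclusions and completes the proof.
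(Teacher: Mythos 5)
Your proposal follows the paper's own route for Theorem~\ref{t13} step for step: the paper's proof is essentially the paragraph opening the ``Higher dimensions'' part of Section~\ref{s:6}, which records the dimension count for $\partial\mathcal{P}_Q$, asserts $\mu(B_{a})\le Ca^{2m-3}$, observes that Proposition~\ref{p:9} is dimension-independent, and substitutes $a_n=K/\sqrt[2m-3]{nf(n)}$ into the argument of Theorem~\ref{t2}. You have reconstructed exactly this, and you correctly single out the volume bound as the one genuinely new ingredient.

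That volume bound, however, is where the argument has a real gap, and your sketch does not close it: the exponent $2m-3$ is not what the geometry produces. The set of inward vectors with foot point on the $(m-2)$-skeleton of $\partial Q$ is indeed $(2m-3)$-dimensional, but it has codimension $1$ in the $(2m-2)$-dimensional phase space $P_Q$, so its $a$-neighborhood has $\mu$-measure of order $a$, not $a^{2m-3}$; the same is true of the ``near-miss'' set, since for each fixed direction the foot points whose free flight passes within $a$ of a fixed $(m-2)$-dimensional edge form a slab of width $O(a)$ inside an $(m-1)$-dimensional face. (The tangency stratum contributes only $O(a^{2})$ because the Liouville density vanishes there, but the skeleton term dominates.) Hence for $m\ge 3$ the correct estimate is $\mu(B_a)\asymp a$, exactly as in the plane, and $a\gg a^{2m-3}$. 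With the corrected bound, $\mu(P_Q\setminus G_{a_n}^{T_n})\le k_3\,n\,a_n = k_3K\,n^{1-1/(2m-3)}f(n)^{-1/(2m-3)}$, which does not tend to $0$ for $m\ge 3$ and slowly growing $f$, so the sets $G_{a_n}^{T_n}$ need not have measure tending to $1$ and the $G_\delta$ and subsequence argument collapses for your choice of $a_n$. What the method actually yields in $\R^m$ is the planar radius $1/(n_i^2f(n_i))$ along a subsequence (take $a_n=K/(nf(n))$), which for $m\ge 3$ is smaller than the radius $1/\bigl(n_i\sqrt[2m-3]{n_if(n_i)}\bigr)$ claimed in the statement. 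To prove the theorem as stated you would need a genuinely stronger estimate on $\mu(B_a)$ than either your sketch or the paper provides; as written, this step would fail.
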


\end{document}